\documentclass[11 pt, reqno]{amsart}

\usepackage{latexsym}
\usepackage{amssymb}
\usepackage{stmaryrd}
\usepackage{mathrsfs}
\usepackage{dsfont}
\usepackage{amsmath}
\usepackage{dsfont}
\usepackage{fancybox,color}
\usepackage{enumerate}
\usepackage[utf8]{inputenc}

\usepackage[colorlinks]{hyperref}
\definecolor{mycolor}{rgb}{0, 0, 1}
\hypersetup{colorlinks,linkcolor={mycolor},citecolor={mycolor},urlcolor={mycolor}}  
\usepackage{eurosym}
\usepackage{url}
\usepackage{graphicx}

\newcommand{\kom}[1]{}
\renewcommand{\kom}[1]{{\bf [#1]}}

 \def\1{\raisebox{2pt}{\rm{$\mathds{1}$}}}

\newtheorem{theorem}{Theorem}[section]

\newtheorem{lemma}[theorem]{Lemma}
\newtheorem{proposition}[theorem]{Proposition}
\theoremstyle{definition}
\newtheorem{definition}[theorem]{Definition}

\newcommand{\R}{{\mathbb R}}

\newcommand{\Om}{\Omega}
\newcommand{\T}{\textup{T}}

\renewcommand{\div}{\operatorname{div}}

\renewcommand{\d}{\hspace{0.05em}\textup{d}}

\newcommand{\abs}[1]{\left|#1\right|}
\newcommand{\norm}[1]{\left|\left|#1\right|\right|}
\newcommand{\ip}[1]{\left\langle #1\right \rangle}

\newcommand{\dist}{\operatorname{dist}}

\newcommand{\esssup}{\operatornamewithlimits{ess\, sup}}
\newcommand{\essinf}{\operatornamewithlimits{ess\,inf}}

\begin{document}

\title[Uniqueness for Trudinger's parabolic equation]{Uniqueness and comparison principle for Trudinger's parabolic equation with Hölder continuous boundary values}
\author[Anttila]{Riku Anttila} \address{Department of Mathematics and Statistics, University of Jyv\"askyl\"a, FI-40014 Jyv\"askyl\"a, Finland} \email{riku.t.anttila@jyu.fi}

\author[Parviainen]{Mikko Parviainen} \address{Department of Mathematics and Statistics, University of Jyv\"askyl\"a, FI-40014 Jyv\"askyl\"a, Finland} \email{mikko.j.parviainen@jyu.fi}

\date{\today}
\keywords{Trudinger's equation, uniqueness, comparison principle, Hölder continuity, doubling of time variables}
\subjclass[2020]{Primary 35K65; Secondary 35K55, 35D30}
\thanks{This work was supported by the Research Council of Finland (project 360185),
and the Finnish Ministry of Education and Culture's Pilot for Doctoral Programmes
(Pilot project Mathematics of Sensing, Imaging and Modelling). We give a special thanks to Peter Lindqvist for sharing his insight about Trudinger's equation, and also for his comments related to this work.
The proof development and the
preparation of the paper were assisted by ChatGPT. The authors take full responsibility for its contents.}

\begin{abstract}
For every $p>1$, we establish uniqueness and comparison principle for Trudinger's parabolic equation
\[
    \partial_t\!\left(\abs{u}^{p-2}u\right)
   = \text{div}\!\left(\abs{\nabla u}^{p-2}\nabla u\right)
    \quad\text{in }\Omega \times(0,\T),
\]
assuming suitable Hölder regularity for the boundary values. Our comparison principle applies to sign-changing weak sub- and supersolutions.
As an application, we obtain uniqueness of weak solutions to Cauchy--Dirichlet problems when the lateral boundary data is suitably Hölder continuous in time. In particular, Lipschitz continuous boundary values are covered.
\end{abstract}
\maketitle

\section{Introduction}

Uniqueness and comparison principle are long standing open problems for sign-changing weak solutions to Trudinger's equation \cite{Trudinger}
\begin{equation}\label{PDE}
    \partial_t\!\left(\abs{u}^{p-2}u\right)
    \,=\,\div(\abs{\nabla u}^{p-2}\nabla u),
    \quad 1<p<\infty.
\end{equation}
Recently, in \cite{ALP26}, uniqueness of weak solutions to Cauchy--Dirichlet problems was established under the natural parabolic Sobolev space and continuity assumptions, with the additional requirements that the boundary value is $C^2$ and $p\geq 2$. However, the methods used there do not seem to extend easily to a much more general situation, and the case $1 < p < 2$ was left open.
In this paper, we establish a novel method and prove \emph{uniqueness} and \emph{comparison principle}, \autoref{thm:unique} and \autoref{thm:comparison-principle}, assuming only certain time H\"older continuity of the boundary values, in addition to the usual assumptions. We cover the full range $p>1$.

\sloppy The difficulty in proving uniqueness and comparison is that, due to the lack of suitable time derivatives of the solutions, we cannot directly use the seemingly natural test functions.
In \cite{ALP26} we bypassed these difficulties using Otto's \cite{Otto} version of Kru\v{z}kov's \cite{kruzkov70} doubling of time variables, which Otto used to prove a comparison principle for time-independent boundary values. For time-dependent boundary values, however, an application of Otto's method in obtaining a comparison principle required a strict gap between the boundary values, see \cite[Theorem 3.1]{ALP26}. To prove uniqueness from this ``gap comparison principle'', we used an approximation argument of \cite{LPSSarsa} which relies on the $C^2$ assumption.

In order to relax the $C^2$ assumption of \cite{ALP26}, the core idea in the present work is to test the equation with functions that involve
\[
    \eta'\!\left(\frac{u(x,t)-v(x,s)}{\delta(x)}\right),
\]
where $\delta(x)$ ranges over carefully chosen functions depending on the spatial variable $x$. In contrast, $\delta(x)$ are constants in \cite{Otto,ALP26}. This allows us to treat the regions close to and far from the boundary separately. Indeed, these functions are chosen so that they are large enough on the boundary to accommodate the difference between the times $t$ and $s$, but much smaller in the interior. Its spatial variation introduces an additional error, which we absorb against the positive monotonicity term before estimating the remainder. This gives a direct comparison principle without a strict gap between the boundary values or auxiliary approximating solutions.

Uniqueness and comparison for non-negative solutions to Trudinger's equation have been studied in \cite{LL22,BS24} with certain strict positivity assumptions. Some methods used in these works originate from \cite{IMJ97}. For $p\geq 2$, \cite{LPSSarsa} proves uniqueness among non-negative weak solutions with $C^2$ boundary data. A Galerkin approximation, cf. \cite{altl83}, is used there to obtain the basic estimates. For sign-changing solutions with time-dependent boundary data, the $C^2$ uniqueness result in \cite{ALP26} provides the starting point for the present work.

Furthermore, Trudinger's equation by now has an extensive literature on regularity theory.
For positive solutions, \cite{LPSSiltakoski} establishes local Lipschitz continuity in the spatial variables, while the Schauder estimates of \cite{BDGLS26} imply that the spatial gradient is locally H\"older continuous. Regularity results also extend beyond the positive case, see \cite{BDL21} for H\"older continuity of sign-changing solutions and \cite{GV06,KK07} for Harnack inequalities of non-negative solutions. The relation between the weak and viscosity formulations has also been studied. These notions of solution coincide in the positive case by \cite{LPSSiltakoski}, and in the non-negative case for $p\geq2$ by \cite{LPS26c}.

\subsection*{Plan of the proof}
In this outline we restrict to $p\geq 2$ for simplicity, although the proofs cover the full range $1<p<\infty$. Let  $\Om_\T=\Om\times(0,\T)$. We use the Heaviside approximation
\[
    \eta'(z) :=
    \begin{cases}
        0,&z\leq1,\\
        z-1,&1<z<2,\\
        1,&z\geq2.
    \end{cases}
\]
As $\delta\downarrow0$, $\eta'(z/\delta)$ converges to the indicator of $\{z>0\}$. Thus $\eta'((u-v)/\delta)$ singles out the positivity set of $u-v$.

We first illustrate comparison in the favorable case when suitable time derivatives exist, following a well known argument. Suppose that $u$ is a subsolution, $v$ is a supersolution, and $u\leq v$ on the parabolic boundary. Assume for this explanation that
\[
    \partial_t(\abs{u}^{p-2}u),\quad
    \partial_t(\abs{v}^{p-2}v)\in L^1(\Om_\T).
\]
For a constant $\delta>0$, we use the same non-negative test function
\[
    \eta'\!\left(\frac{u-v}{\delta}\right)\zeta(t),
    \qquad 0\leq\zeta\in C_0^\infty(0,\T),
\]
in the two inequalities and subtract. This gives
\begin{align*}
    &\int \limits_0^\T\!\!\!\int \limits_\Om
    \eta'\!\left(\frac{u-v}{\delta}\right)
    \partial_t\!\left(\abs{u}^{p-2}u-\abs{v}^{p-2}v\right)\zeta\,\d x\d t\\
    &\quad+\int \limits_0^\T\!\!\!\int \limits_\Om
    \frac{\zeta}{\delta}\eta''\!\left(\frac{u-v}{\delta}\right)
    \ip{\abs{\nabla u}^{p-2}\nabla u-\abs{\nabla v}^{p-2}\nabla v,
    \nabla u-\nabla v}\,\d x\d t\leq0.
\end{align*}
The second integral is non-negative by monotonicity. Letting $\delta\downarrow0$ and integrating by parts in time, we obtain
\[
    \int \limits_0^\T\!\!\!\int \limits_\Om
    \left(\abs{u}^{p-2}u-\abs{v}^{p-2}v\right)^+\zeta'\,\d x\d t\geq0.
\]
Thus the spatial integral of the positive part is non-increasing, and the initial ordering gives $u\leq v$.

However, these time derivatives are not assumed in the present paper. Instead, under the hypotheses of Theorem~\ref{thm:comparison-principle}, we double the time variable and use regularizations of
\[
    \eta'\!\left(\frac{u(x,t)-v(x,s)}{\delta(x)}\right)
    \zeta(t)\phi_\varepsilon(t-s),
\]
where $\phi_\varepsilon$ is the rescaled mollifier defined in Section~\ref{sec:errors}, and $\delta$ is a positive Lipschitz function. The details of this testing argument are given in Lemma~\ref{lemma:key} and Proposition~\ref{prop:doubling}.

There are two errors to control: the temporal error $E_{\mathrm T}$ and the spatial error $E_{\mathrm S}$. The temporal error comes from replacing the time primitives appearing in Proposition~\ref{prop:doubling} by $(\abs{u(t)}^{p-2}u(t)-\abs{v(s)}^{p-2}v(s))^+$; see Lemma~\ref{lemma:error-1}. The proof of Proposition~\ref{prop:bound-I} gives
\[
    E_{\mathrm T}\leq\frac{C}{\varepsilon} \int \limits_\Om\delta\,\d x
\]
for small $\varepsilon$, where the factor $1/\varepsilon$ comes from differentiating the time mollifier. This error already occurs when $\delta$ is constant.

The additional spatial error comes from differentiating $\delta$. By the chain rule we have
\begin{align*}
    &\nabla\eta'\!\left(\frac{u(t)-v(s)}{\delta}\right)\\
    &\quad=\frac1\delta\eta''\!\left(\frac{u(t)-v(s)}{\delta}\right)
    \left(\nabla u(t)-\nabla v(s)
    -\frac{\nabla \delta}{\delta} (u(t) - v(s)) \right).
\end{align*}
Above we omit the spatial variable to keep the equation compact.
The last term need not have a favorable sign, meaning we need to estimate it. Furthermore, it is essential to absorb some of its contribution against the positive monotonicity term before discarding that term, so this point requires quite subtle estimation. Nevertheless, in Lemma ~\ref{lemma:error-2}, we obtain the spatial error term
\[
    E_{\mathrm S}=
    \left(\, \int \limits_\Om\frac{\abs{\nabla\delta}^p}{\delta^{p/2}}\,\d x\right)^{2/p}.
\]
Combining the two estimates, and ensuring that our test function with doubled time variable has zero lateral trace, Proposition~\ref{prop:error-3} gives the inequality
\[
\begin{aligned}
    &\int \limits_0^\T\!\!\!\int \limits_0^\T\!\!\!\int \limits_\Om
    \left(\abs{u(t)}^{p-2}u(t)-\abs{v(s)}^{p-2}v(s)\right)^+
    \zeta'(t)\phi_\varepsilon(t-s)\,\d x\d t\d s\\
    &\qquad\geq -C(E_{\mathrm T}+E_{\mathrm S}).
\end{aligned}
\]
We note that Hölder continuity and careful selections of $\delta$ are both crucial to guarantee that the lateral trace is zero. Moreover, in order to control the error terms, $\delta$ must be much smaller in the interior than near the boundary. The precise choices for $\delta$ are given in Lemma~\ref{lemma:delta-2}, and the resulting error term bounds are given in Proposition~\ref{prop:bound-I}. The error terms are handled in the proof of Theorem \ref{thm:comparison-principle}.

\section{Preliminaries}\label{sec:Preli}
Throughout the paper we have fixed $p > 1$, $\T>0$ and let $\Om \subset \R^n$ be a bounded Lipschitz domain. We use the standard notation
\[
    \Om_\T := \Om \times (0,\T) \quad \text{and} \quad \partial_{\mathcal{P}} \Om_\T := (\partial \Om \times [0,\T]) \cup (\Om \times \{0\}).
\]
The convex primitive $\eta : \R \to \R$ of the Heaviside approximation used above, normalized by $\eta(0)=0$, is given by
\[
    \eta(z) := \begin{cases}
        0 & \text{ when } z \leq 1,\\
        \vspace{5pt}
            \dfrac{(z-1)^2}{2} & \text{ when } 1 < z < 2,\\
            \vspace{5pt}
        z - \dfrac{3}{2} & \text{ when } z \geq 2.
        \end{cases}
\]

\subsection{Terminology}
We recall some standard terminology and notation of parabolic equations.
\begin{definition}
    The \emph{parabolic Sobolev space} $L^p(0,\T;W^{1,p}(\Omega))$ consists of measurable functions $u : \Om_\T \to [-\infty,\infty]$ satisfying the following.
    \begin{enumerate}
        \item For a.e.\ $t \in (0,\T)$ the function
        $x \mapsto u(x,t)$ lies in $W^{1,p}(\Omega)$.
        \item The function $u$ has finite \emph{parabolic Sobolev norm}
        \[
            \left(\int \limits_{0}^\T  \!\!\! \int \limits_{\Om} \abs{u}^p + \abs{\nabla u}^p \, \d x \d t\right)^{\frac{1}{p}} < \infty.
        \]
    \end{enumerate}
    Here $\nabla u$ denotes the spatial weak gradient of $u$. The space $L^p(0,\T;W^{1,p}_0(\Om))$ is defined analogously.
\end{definition}

    \begin{definition}\label{def:compare}
        Given two functions $u,v \in L^p(0,\T;W^{1,p}(\Om))$ the notation $u \leq v$ on $\partial_\mathcal{P} \Om_\T$ means that the following hold.
    \begin{enumerate}
        \item $(u-v)^+ \in L^p(0,\T;W_0^{1,p}(\Om))$.
        \item It holds that
        \[
        \limsup_{h \downarrow 0} \frac{1}{h}\int \limits_{h}^{2h} \!\!\! \int \limits_{\Om} \left(\abs{u}^{p-2}u -\abs{v}^{p-2}v\right)^+ \d x \d t = 0.
        \]
    \end{enumerate}
    If a third function $\psi \in L^p(0,\T;W^{1,p}(\Om))$ is given we also write $u \leq \psi \leq v$ on $\partial_\mathcal{P} \Om_\T$ with the obvious interpretation.
    \end{definition}

    \begin{definition}\label{def:solution}
        Let $u \in L^p(0,\T;W^{1,p}(\Om))$. We say that $u$ is a \emph{weak subsolution} to \eqref{PDE} if it holds for every non-negative $\varphi \in C_0^\infty( \Om_\T )$ that
            \begin{equation*}
                \int \limits_0^\T \!\!\! \int \limits_\Om -\abs{u}^{p-2}u \, \partial_t \varphi + \ip{\abs{\nabla u}^{p-2}\nabla u, \nabla \varphi }\, \d x \d t \leq 0.
            \end{equation*}
        If $-u$ is a weak subsolution \eqref{PDE} then $u$ is called a \emph{weak supersolution}. Finally, $u$ is a \emph{solution} to \eqref{PDE} given that it is both a weak subsolution and a weak supersolution to \eqref{PDE}.
    \end{definition}

    \begin{definition}\label{def:cauchy-dirichlet}
        Let $\psi \in C(\overline{\Om_\T}) \cap L^p(0,\T;W^{1,p}(\Om))$ and $u$ be a solution to \eqref{PDE}. We say that $u$ solves the \emph{Cauchy--Dirichlet} problem with boundary values $\psi$ if the following hold.
        \begin{enumerate}
            \item $u-\psi \in L^p(0,\T;W_0^{1,p}(\Om))$.
            \item It holds that
            \[
                \lim_{t \downarrow 0} \int \limits_{\Om} \abs{ \abs{u(x,t)}^{p-2}u(x,t) - \abs{\psi(x,t)}^{p-2}\psi(x,t)}  \d x = 0. 
            \]
        \end{enumerate}
\end{definition}

And finally, given $\alpha \in (0,1]$ and a boundary value $\psi \in C(\overline{\Om_\T})$, we say that $\psi$ is \emph{$\alpha$-H\"older continuous in time} if there is $H>0$ such that
    \[
        \abs{\psi(x,t)-\psi(x,s)}\leq H\abs{t-s}^\alpha
        \quad\text{for all }x\in\partial\Om\text{ and }t,s\in(0,\T).
    \]

\section{Doubling of time variables}\label{sec:Otto}
We begin the proof of Theorem \ref{thm:comparison-principle} by deriving fundamental integral inequalities through Otto's doubling of time variables method.

Let $\tilde{\eta}(z):=\eta(-z)$. For $\delta>0$, define
\[
    q_\delta(z,w) := (p-1)\int \limits_w^z \eta'\left(\frac{\tau-w}{\delta}\right) \abs{\tau}^{p-2}\, \d \tau.
\]
We define $\tilde{q}_\delta$ by replacing $\eta$ with $\tilde{\eta}$ in this formula. Note that $\tilde{q}_\delta$ can also be given by $\tilde{q}_\delta(w,z) = q_\delta(-w,-z)$.
When $\delta$ depends on $x$, the notation $q_\delta$ and $\tilde{q}_\delta$ is understood pointwise with parameter $\delta(x)$. 
We also sometimes suppress the spatial $x$ and write $u(t)=u(x,t)$ and $v(s)=v(x,s)$.

\begin{lemma}\label{lemma:key}
    Let $u$ be a weak subsolution to \eqref{PDE}, $\zeta \in C_0^\infty(0,\T)$ be non-negative and $\delta : \overline{\Om} \to \R$ be a positive Lipschitz function. Also let $f \in W^{1,p}(\Om)$ such that 
    \[
        x \mapsto \eta'\left( \frac{u(x,t)-f(x)}{\delta(x)} \right)\zeta(t)
    \]
    lies in $W_0^{1,p}(\Om)$ for almost every $t \in (0,\T)$. Then
    \begin{align*}
         & \int \limits_0^\T \!\!\! \int \limits_\Om q_\delta(u(t),f) \partial_t \zeta(t) \, \d x \d t \geq \int \limits_0^\T \!\!\! \int \limits_\Om \frac{\zeta(t)}{\delta} \eta'' \! \left( \frac{u(t) - f}{\delta} \right)\\
         & \hspace{2.5cm} \times \ip{ \abs{\nabla u(t)}^{p-2}\nabla u(t) , \nabla u(t) - \nabla f -\frac{\nabla\delta}{\delta} (u(t) - f) } \d x \d t.
        \end{align*}
\end{lemma}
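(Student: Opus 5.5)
We test the subsolution inequality with a time-regularized version of $\varphi(x,t)=\eta'\bigl((u(x,t)-f(x))/\delta(x)\bigr)\zeta(t)$. Since $u$ has no time derivative, we use Steklov averages $u_h(x,t)=\frac1h\int_t^{t+h}u(x,\tau)\,\d\tau$, or equivalently the exponential time mollification $[\,\cdot\,]_h$ of Landes/Kinnunen--Lindqvist type. First we rewrite Definition~\ref{def:solution} in mollified form: for non-negative $\varphi$ with $\varphi(\cdot,t)\in W^{1,p}_0(\Om)$ and compact support in time,
\[
\int_0^\T\!\!\int_\Om \partial_t\bigl[\abs{u}^{p-2}u\bigr]_h\,\varphi+\ip{\bigl[\abs{\nabla u}^{p-2}\nabla u\bigr]_h,\nabla\varphi}\,\d x\d t\le 0 .
\]
Density of $C_0^\infty$ functions together with the hypothesis that $\eta'((u-f)/\delta)\zeta\in W^{1,p}_0$ for a.e.\ $t$ justifies this class of test functions. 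Since $\eta'$ is bounded, the test function lies in $L^\infty$, and its gradient $\frac{\zeta}{\delta}\eta''(\cdot)\bigl(\nabla u-\nabla f-\frac{\nabla\delta}{\delta}(u-f)\bigr)$ lies in $L^p$ because $\delta$ is Lipschitz and bounded below on $\overline\Om$.

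To handle the time term, we set $w_h=[\abs{u}^{p-2}u]_h$ and $U_h=\abs{w_h}^{\frac{1}{p-1}-1}w_h$, so that $\abs{U_h}^{p-2}U_h=w_h$. We then use the test function $\eta'((U_h-f)/\delta)\zeta$ in the time term. Because $f$ and $\delta$ do not depend on $t$, the chain rule gives
\[
\partial_t w_h\,\eta'\!\left(\tfrac{U_h-f}{\delta}\right)=(p-1)\abs{U_h}^{p-2}\partial_tU_h\,\eta'\!\left(\tfrac{U_h-f}{\delta}\right)=\partial_t q_\delta(U_h,f).
\]
Here $\partial_z q_\delta(z,w)=(p-1)\eta'((z-w)/\delta)\abs{z}^{p-2}$. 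Integrating by parts in $t$ turns the time term into $-\iint q_\delta(U_h,f)\zeta'$. As $h\to0$ we have $U_h\to u$ in $L^p$, and $q_\delta(z,w)\le C(\abs{z}^p+\abs{w}^p)$, so dominated convergence yields $-\iint q_\delta(u,f)\zeta'$.

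The main obstacle is the mismatch between the two test functions: the elliptic term needs $\eta'((u-f)/\delta)$ paired with $\nabla u$, while the time term was computed with $U_h$. We would handle this in one of two ways. The first is the standard monotonicity trick for the exponential mollifier: since $\partial_t w_h=(\abs{u}^{p-2}u-w_h)/h$ and $\eta'$ is nondecreasing,
\[
\partial_t w_h\Bigl(\eta'\bigl(\tfrac{u-f}{\delta}\bigr)-\eta'\bigl(\tfrac{U_h-f}{\delta}\bigr)\Bigr)\ge0,
\]
because $\abs{u}^{p-2}u-w_h$ and $u-U_h$ have the same sign. This lets us replace the true test function by the $U_h$ one in the time term with an inequality in the correct direction. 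The second, if the first fails, is to use Steklov averages and pass to the limit via the convexity of $z\mapsto q_\delta(z,f)$ in the appropriate variable.

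For the elliptic term, $[\abs{\nabla u}^{p-2}\nabla u]_h\to\abs{\nabla u}^{p-2}\nabla u$ in $L^{p'}$ and the test gradient is fixed, so the limit is exactly
\[
\iint\frac{\zeta}{\delta}\eta''\!\left(\tfrac{u-f}{\delta}\right)\ip{\abs{\nabla u}^{p-2}\nabla u,\nabla u-\nabla f-\tfrac{\nabla\delta}{\delta}(u-f)}\,\d x\d t .
\]
Rearranging the resulting inequality gives the claim. For $1<p<2$ the weight $\abs{\tau}^{p-2}$ is singular at $0$ but integrable, so $q_\delta$ remains well defined and the same argument applies.
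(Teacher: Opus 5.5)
Your proof is correct, and your first option already closes it. The fallback you mention (Steklov averages plus convexity) is essentially what the paper uses.

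The paper only refers to \cite[Lemma 3.3]{ALP26}. There one tests with the Steklov average of $\varphi=\eta'((u-f)/\delta)\zeta$, so the time term becomes a difference quotient of $\abs{u}^{p-2}u$ paired with $\varphi$ evaluated at the original $u$. Otto's convexity inequality $B(\sigma_0)-B(\sigma_1)\le B'(\sigma_0)(\sigma_0-\sigma_1)$ then turns this into a difference quotient of $q_\delta(u,f)$. Here, for fixed $x$, $B(\sigma)=\int_{\abs{f}^{p-2}f}^{\sigma}\eta'\bigl((\abs{\tau}^{\frac{2-p}{p-1}}\tau-f)/\delta\bigr)\,\d\tau$, so that $B(\abs{z}^{p-2}z)=q_\delta(z,f)$. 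The only new point for non-constant $\delta$ is the chain rule for $\nabla\eta'((u-f)/\delta)$, which you also carry out.

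You replace this by the exponential mollifier and the Kinnunen--Lindqvist sign argument based on $\partial_t w_h=(\abs{u}^{p-2}u-w_h)/h$. Both routes rest on the same fact: $B'$ is nondecreasing because $f$ and $\delta$ do not depend on $t$. This is the point of the footnote in the proof of Lemma~\ref{lemma:max-principle}. Your version makes the time term an exact derivative and the comparison pointwise. The Steklov version never leaves $u$, so it needs neither $U_h$ nor the continuity of $\sigma\mapsto\abs{\sigma}^{\frac{2-p}{p-1}}\sigma$ from $L^{p'}$ to $L^p$. It also needs no care at $t=0$, since the Steklov average of a function compactly supported in time stays compactly supported.

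On that last point, your mollified inequality is not literally $\le 0$. The exponential mollification starts at $t=0$, where no initial trace is assumed; already for $u\equiv1$ and $p=2$ one has $\partial_t[u]_h=e^{-t/h}/h>0$. The fix is to start the mollification at a Lebesgue time $a\in(0,\min\operatorname{supp}\zeta)$ of $t\mapsto\abs{u(t)}^{p-2}u(t)$. This produces an extra term of order $e^{-(\min\operatorname{supp}\zeta-a)/h}$, which disappears as $h\downarrow0$, so your conclusion is unaffected.

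Also, for $p>2$ the inverse $\sigma\mapsto\abs{\sigma}^{\frac{2-p}{p-1}}\sigma$ is not Lipschitz at $0$, so $U_h$ need not be absolutely continuous in $t$. Read your chain rule as $\partial_t B(w_h)=B'(w_h)\,\partial_t w_h$ with the $C^1$ function $B$ above, rather than through $\partial_t U_h$.
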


\begin{proof}
    The proof in the case where $\delta$ is a constant function is treated in \cite[Lemma 3.3]{ALP26}. The general version follows from the same proof with an obvious modification involving  
    $$\nabla  \eta'\left( \frac{u(x,t)-f(x)}{\delta(x)} \right)$$
     and a chain rule.
\end{proof}

\begin{proposition}\label{prop:doubling}
    Let $u$ be a weak subsolution and $v$ be a weak supersolution to \eqref{PDE}, respectively. Also let $\delta : \overline{\Om} \to \R$ be a positive Lipschitz function and $\gamma \in C_0^\infty((0,\T) \times (0,\T))$ be non-negative.
    Suppose that
    \[
        x \mapsto \eta'\left( \frac{u(x,t)-v(x,s)}{\delta(x)} \right)\gamma(t,s)
    \]
    lies in $W_0^{1,p}(\Om)$ for almost every $t,s \in (0,\T)$. Then
    \begin{align*}
        & \quad \, \int \limits_0^\T \!\!\!  \int \limits_0^\T \!\!\! \int \limits_\Om q_\delta(u(t),v(s)) \partial_t \gamma(t,s) + \tilde{q}_\delta(v(s),u(t)) \partial_s \gamma(t,s) \d x \d t \d s \\
        &\geq \quad \int \limits_0^\T \!\!\!  \int \limits_0^\T \!\!\! \int \limits_\Om \frac{\gamma(t,s)}{\delta}\eta''\left( \frac{u(t)-v(s)}{\delta} \right)\\
        & \times \Bigg(\ip{\abs{\nabla u(t)}^{p-2}\nabla u(t) - \abs{\nabla v(s)}^{p-2}\nabla v(s), \nabla u(t) - \nabla v(s)}\\
        & \hspace{.5cm} -\ip{ \abs{\nabla u(t)}^{p-2}\nabla u(t) - \abs{\nabla v(s)}^{p-2}\nabla v(s), \frac{\nabla\delta}{\delta} (u(t) - v(s))}\Bigg) \d x \d t \d s.
    \end{align*}
\end{proposition}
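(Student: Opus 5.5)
We apply Lemma~\ref{lemma:key} twice, once to $u$ and once to $v$, each time freezing the other function's time variable and integrating over it. For the subsolution we fix $s$, take $f:=v(\cdot,s)\in W^{1,p}(\Om)$ (valid for a.e.\ $s$) and $\zeta(t):=\gamma(t,s)$. Strictly, Lemma~\ref{lemma:key} needs $\zeta$ independent of $s$ and compactly supported in $(0,\T)$. Both hold here, since $\gamma(\cdot,s)\in C_0^\infty(0,\T)$ for each fixed $s$. The hypothesis on the trace is exactly the assumption of the proposition for a.e.\ $(t,s)$; by Fubini it holds for a.e.\ $t$ for a.e.\ fixed $s$. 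This gives
\[
\int_0^\T\!\!\int_\Om q_\delta(u(t),v(s))\,\partial_t\gamma\,\d x\d t \ge \int_0^\T\!\!\int_\Om \frac{\gamma}{\delta}\eta''\Big(\frac{u(t)-v(s)}{\delta}\Big)\Big\langle A(\nabla u(t)),\nabla u(t)-\nabla v(s)-\tfrac{\nabla\delta}{\delta}(u(t)-v(s))\Big\rangle,
\]
where $A(\xi)=\abs{\xi}^{p-2}\xi$. We then integrate this in $s$.

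For the supersolution we use that $-v$ is a subsolution. Fix $t$ and apply Lemma~\ref{lemma:key} to $-v$ with $f:=-u(\cdot,t)$ and $\zeta(s):=\gamma(t,s)$. The test quantity becomes $\eta'\big((-v(s)+u(t))/\delta\big)$, which is the same function as in the hypothesis. The left-hand side becomes $q_\delta(-v(s),-u(t))=\tilde q_\delta(v(s),u(t))$ by the identity noted before the lemma. On the right-hand side, $A(\nabla(-v))=-A(\nabla v(s))$, and the bracket becomes $\nabla u(t)-\nabla v(s)-\frac{\nabla\delta}{\delta}(u(t)-v(s))$, which is the same vector as before. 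Hence the $v$-term reads
\[
\int \frac{\gamma}{\delta}\eta''(\cdot)\Big\langle -A(\nabla v(s)),\nabla u(t)-\nabla v(s)-\tfrac{\nabla\delta}{\delta}(u(t)-v(s))\Big\rangle .
\]
Integrating in $t$ and adding to the $u$-inequality produces $A(\nabla u(t))-A(\nabla v(s))$ paired with the common vector. Splitting that pairing gives exactly the two terms in the statement.

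The remaining points are technical. We need integrability of all the terms to use Fubini and swap the order of integration. Here $q_\delta(z,w)$ is bounded by $C\abs{z-w}(\abs z+\abs w)^{p-2}$ when $p\ge2$, and by a similar expression when $p<2$; this is integrable since $u,v\in L^p$. The right-hand integrands are controlled by $\abs{\nabla u}^{p-1}(\abs{\nabla u}+\abs{\nabla v}+\norm{\nabla\delta}_\infty\,\delta^{-1}\cdot 2\delta)$, using that $\eta''(z/\delta)\ne0$ forces $\abs{u-v}\le 2\delta$, and $\delta$ is bounded below on the compact $\overline\Om$. We must also check measurability in $(x,t,s)$ jointly so that Lemma~\ref{lemma:key} can be applied slice-wise and integrated. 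I expect the main care point to be this slice-wise use of the lemma: verifying its hypothesis for a.e.\ fixed $s$, and confirming that the inequality for each slice integrates to the doubled inequality. The algebra itself is immediate.
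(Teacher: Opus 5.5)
Your proposal is correct and is essentially the paper's proof: you apply Lemma~\ref{lemma:key} to $u$ with $f=v(\cdot,s)$, $\zeta=\gamma(\cdot,s)$ and to $-v$ with $f=-u(\cdot,t)$, $\zeta=\gamma(t,\cdot)$, use $\tilde q_\delta(v(s),u(t))=q_\delta(-v(s),-u(t))$, then integrate each inequality in the frozen variable and add. Your remarks on integrability and on the Fubini/slice-wise application fill in details that the paper leaves implicit.
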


\begin{proof}
    For almost every $s \in (0,\T)$ we can apply Lemma \ref{lemma:key} with subsolution $u$, $\zeta(t) = \gamma(t,s)$ and $f(x) = v(x,s)$. This yields
    \begin{align*}
         & \int \limits_0^\T \!\!\! \int \limits_\Om q_\delta(u(t),v(s)) \partial_t \gamma(t,s) \, \d x \d t \geq \int \limits_0^\T \!\!\! \int \limits_\Om \frac{\gamma(t,s)}{\delta} \eta'' \! \left( \frac{u(t) - v(s)}{\delta} \right)\\
         & \hspace{1.8cm} \times \ip{ \abs{\nabla u(t)}^{p-2}\nabla u(t) , \nabla u(t) - \nabla v(s) -\frac{\nabla\delta}{\delta} (u(t) - v(s)) } \d x \d t.
        \end{align*}
    Then we interchange the role of $t$ and $s$ and apply Lemma \ref{lemma:key} again. In this case the parameters are subsolution $-v$, $\zeta(s) = \gamma(t,s)$, $f(x) = -u(x,t)$. By noting that $\tilde{q}_\delta(v(s),u(t)) = q_\delta(-v(s),-u(t))$ we obtain
    \begin{align*}
         & \int \limits_0^\T \!\!\! \int \limits_\Om \tilde{q}_\delta(v(s),u(t)) \partial_s \gamma(t,s) \, \d x \d s \geq -\int \limits_0^\T \!\!\! \int \limits_\Om \frac{\gamma(t,s)}{\delta} \eta'' \! \left( \frac{u(t) - v(s)}{\delta} \right)\\
         & \hspace{1.8cm} \times \ip{ \abs{\nabla v(s)}^{p-2}\nabla v(s) , \nabla u(t) - \nabla v(s) -\frac{\nabla\delta}{\delta} (u(t) - v(s)) } \d x \d s.
        \end{align*}
    Then we integrate the first integral inequality with respect to $s \in (0,\T)$, the second with respect to $t \in (0,\T)$, and add these together. From this arises  the desired integral inequality.
\end{proof}

\section{Error terms}\label{sec:errors}
The doubling of time variables in Proposition \ref{prop:doubling} creates some technical error terms that require a careful treatment. The purpose of this section is to collect and simplify these terms.

We first introduce auxiliary two-variable functions which are slightly modified from those in Otto's work; see also \cite{ALP26}. Throughout the remaining paper let $\phi$ denote the standard mollifier
\[
    \phi(\tau) :=
    \begin{cases}
        C\exp\left(\dfrac{-1}{1-\abs{\tau}^2}\right) & \text{ if } \abs{\tau} < 1\\
        0 & \text{ if } \abs{\tau} \geq 1.
    \end{cases}
\]
The constant $C > 0$ is chosen so that $\phi$ is a probability density.
For $\varepsilon > 0$ we also define $\phi_\varepsilon(\tau) := (1/\varepsilon)\phi(\tau/\varepsilon)$. And lastly, given $\zeta \in C_0^\infty(0,\T)$, we let
\[
    \zeta_\varepsilon(t,s) := \zeta(t)\phi_\varepsilon(t-s).
\]

We first estimate the left-hand side of Proposition \ref{prop:doubling} and identify the \emph{temporal error term} $E_{\mathrm T}$. The proof is a straightforward application of Lemma \ref{lemma:q-error}.

\begin{lemma}\label{lemma:error-1}
    Let $u,v \in L^p(\Om_\T)$ such that $u$ is essentially bounded from above and $v$ is essentially bounded from below. Also let $\delta : \overline{\Om} \to \R$ be a positive Lipschitz function and $\zeta \in C_0^\infty(0,\T)$.
    Then for all $\varepsilon>0$ it holds that
    \begin{align*}
        & \quad \, \int \limits_0^\T \!\!\!  \int \limits_0^\T \!\!\! \int \limits_\Om q_\delta(u(t),v(s)) \partial_t \zeta_\varepsilon(t,s) + \tilde{q}_\delta(v(s),u(t)) \partial_s \zeta_\varepsilon(t,s) \d x \d t \d s\\
        &\leq \quad \int \limits_0^\T \!\!\!  \int \limits_0^\T \!\!\! \int \limits_\Om \left( \abs{u(t)}^{p-2}u(t) - \abs{v(s)}^{p-2}v(s) \right)^+ \zeta'(t)\phi_\varepsilon(t-s)\,\d x \d t \d s + C \cdot E_{\mathrm T}
    \end{align*}
    where $E_{\mathrm T}$ is the integral
    \[
        E_{\mathrm T} := \int \limits_0^\T \!\!\!  \int \limits_0^\T \!\!\! \int \limits_\Om \delta^{\min\{ p-1,1 \}}\left( \abs{\phi_\varepsilon'(t-s)}\abs{\zeta(t)} + \phi_\varepsilon(t-s)\abs{\zeta'(t)} \right) \,\d x \d t \d s.
    \]
    The constant $C > 0$ depends only on $p$ and $u,v$.
\end{lemma}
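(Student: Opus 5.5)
We compute the derivatives of $\zeta_\varepsilon$ directly:
\[
\partial_t\zeta_\varepsilon=\zeta'(t)\phi_\varepsilon(t-s)+\zeta(t)\phi_\varepsilon'(t-s),\qquad \partial_s\zeta_\varepsilon=-\zeta(t)\phi_\varepsilon'(t-s).
\]
Substituting these into the left-hand side splits it into two pieces:
\[
\iiint q_\delta(u(t),v(s))\,\zeta'(t)\phi_\varepsilon(t-s)
\quad\text{and}\quad
\iiint \bigl(q_\delta(u(t),v(s))-\tilde q_\delta(v(s),u(t))\bigr)\zeta(t)\phi_\varepsilon'(t-s).
\]
The plan is to compare $q_\delta$ and $\tilde q_\delta$ pointwise with $Q(z,w):=(\abs{z}^{p-2}z-\abs{w}^{p-2}w)^+$, and to check that the discrepancy is of order $\delta^{\min\{p-1,1\}}$ times a constant depending on the bounds of $u$ and $v$. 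This is what the paper's forthcoming Lemma on $q$-errors is presumably meant to deliver, so I will prove the needed pointwise bounds directly.

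\textbf{Step 1: exact formula for $Q$.} For $z>w$ we have
\[
Q(z,w)=(p-1)\int_w^z\abs{\tau}^{p-2}\,d\tau,
\]
and $Q(z,w)=0$ when $z\le w$. Since $0\le\eta'\le1$ and $\eta'((\tau-w)/\delta)=1$ once $\tau\ge w+2\delta$, we get
\[
0\le Q(z,w)-q_\delta(z,w)\le (p-1)\int_w^{\min\{z,\,w+2\delta\}}\abs{\tau}^{p-2}\,d\tau ,
\]
and $q_\delta(z,w)=0$ when $z\le w$.

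\textbf{Step 2: bounding the window integral.} The integral over a window of length $2\delta$ is at most $C\delta^{\min\{p-1,1\}}$:
\begin{itemize}
\item If $p\ge2$, the integrand is bounded as long as $\tau$ stays in a bounded set. Here the one-sided bounds are used: $u\le M$ and $v\ge -M$ confine $\tau\in[v(s),\min\{u(t),v(s)+2\delta\}]$ to $[-M,M+2\delta]$. We may assume $\delta\le1$ for this, or else use $\delta^{p-1}\ge\delta$ when $\delta\ge1$.
\item If $1<p<2$, we use $\int_w^{w+2\delta}\abs{\tau}^{p-2}\,d\tau\le C\delta^{p-1}$, which follows by Hölder-concavity of $\tau\mapsto\abs{\tau}^{p-2}\tau$.
\end{itemize}
Hence
\[
\abs{q_\delta(u(t),v(s))-Q(u(t),v(s))}\le C\delta^{\min\{p-1,1\}},
\]
and the same holds for $\tilde q_\delta(v(s),u(t))$. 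For the latter we write $\tilde q_\delta(w,z)=(p-1)\int_z^w\eta'((z-\tau)/\delta)\abs{\tau}^{p-2}\,d\tau$, which equals $Q(z,w)$ up to a window of length $2\delta$ near $z$ on the set $z>w$, and vanishes otherwise. The bound there is symmetric, with the window located near $u(t)\le M$ instead.

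\textbf{Step 3: assembling the estimate.}
\begin{itemize}
\item In the first piece, replacing $q_\delta$ by $Q$ costs at most $C\iiint\delta^{\min\{p-1,1\}}\phi_\varepsilon\abs{\zeta'}$.
\item In the second piece, $Q-Q=0$, so
\[
\abs{q_\delta-\tilde q_\delta}\le\abs{q_\delta-Q}+\abs{Q-\tilde q_\delta}\le C\delta^{\min\{p-1,1\}},
\]
which gives a cost of at most $C\iiint\delta^{\min\{p-1,1\}}\abs{\phi_\varepsilon'}\abs{\zeta}$.
\end{itemize}
Adding the two costs yields exactly $C\cdot E_{\mathrm T}$, which proves the claimed inequality (it even holds with an absolute value).

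\textbf{Main obstacle.} The delicate step is Step 2. The constant must depend only on $p$ and the one-sided bounds $\esssup u$ and $\essinf v$, without assuming $u$ and $v$ are two-sided bounded. So one must check carefully that every window where $\eta'\neq1$ lies between $v(s)$ and $u(t)$ and within $2\delta$ of one of them, and also treat large $\delta$ where needed.
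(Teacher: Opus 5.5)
Your proof is correct and follows essentially the paper's route: the same Leibniz splitting of $\partial_t\zeta_\varepsilon$ and $\partial_s\zeta_\varepsilon$, then adding and subtracting $\bigl(\abs{u(t)}^{p-2}u(t)-\abs{v(s)}^{p-2}v(s)\bigr)^+$, then the pointwise bounds $\abs{q_\delta-Q},\abs{\tilde q_\delta-Q}\le K\delta^{\min\{p-1,1\}}$, which the paper isolates as Lemma \ref{lemma:q-error} while you prove them inline via the window integrals. Two small fixes: the limits in your formula for $\tilde q_\delta$ are reversed, and it should read $\tilde q_\delta(w,z)=(p-1)\int_w^z\eta'((z-\tau)/\delta)\abs{\tau}^{p-2}\,\d\tau$; also, for $p\ge2$ every window lies in $[v(s),u(t)]\subset[-M,M]$, so no discussion of small or large $\delta$ is needed (and your remark $\delta^{p-1}\ge\delta$ would not give a bound of order $\delta$ anyway).
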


\begin{proof}
    By computing the partial derivatives of $\zeta_\varepsilon(t,s) = \zeta(t)\phi_\varepsilon(t-s)$ using the Leibniz rule we can rewrite
    \begin{align*}
        & \quad \, \int \limits_0^\T \!\!\!  \int \limits_0^\T \!\!\! \int \limits_\Om q_\delta(u(t),v(s)) \partial_t \zeta_\varepsilon(t,s) + \tilde{q}_\delta(v(s),u(t)) \partial_s \zeta_\varepsilon(t,s) \d x \d t \d s\\
        &= \quad  \int \limits_0^\T \!\!\!  \int \limits_0^\T \!\!\! \int \limits_\Om q_\delta(u(t),v(s)) \zeta'(t)\phi_\varepsilon(t-s)\\
         & \hspace{2cm} + \left\{q_\delta(u(t),v(s)) - \tilde{q}_\delta(v(s),u(t)) \right\} \zeta(t)\phi_\varepsilon'(t-s) \d x \d t \d s.
    \end{align*}
    We can further rewrite the right-hand side as
    \begin{align*}
        & \quad \, \int \limits_0^\T \!\!\!  \int \limits_0^\T \!\!\! \int \limits_\Om \left( \abs{u(t)}^{p-2}u(t) - \abs{v(s)}^{p-2}v(s) \right)^+ \zeta'(t)\phi_\varepsilon(t-s)\,\\
        & \hspace{.5cm} +
        \left\{q_\delta(u(t),v(s)) - \left( \abs{u(t)}^{p-2}u(t) - \abs{v(s)}^{p-2}v(s) \right)^+\right\} \zeta'(t)\phi_\varepsilon(t-s)\\
        & \hspace{2cm} + \bigg\{q_\delta(u(t),v(s)) - \tilde{q}_\delta(v(s),u(t)) \bigg\} \zeta(t)\phi_\varepsilon'(t-s) \d x \d t \d s.
    \end{align*}
    Then we apply Lemma \ref{lemma:q-error} with $M := \norm{u^+}_\infty+\norm{(-v)^+}_\infty$ and get
    \[
        \abs{q_\delta(u(t),v(s)) - \left(\abs{u(t)}^{p-2}u(t)-\abs{v(s)}^{p-2}v(s)\right)^+ } \leq K \delta^{\min \{p-1,1\} }
    \]
    and also
    \[
        \abs{q_\delta(u(t),v(s))-\tilde q_\delta(v(s),u(t))}
        \leq 2K\delta^{\min \{p-1,1\}}.
    \]
    By applying these two estimates to the integrals displayed above we obtain the desired integral inequality with $C:=2K$.
\end{proof}

Then we estimate the right-hand side of Proposition \ref{prop:doubling} and isolate the \emph{spatial error term} $E_{\mathrm S}$.
In the proof below, one might be tempted to use the standard monotonicity inequality
\[
\bigg\langle|\nabla u(t)|^{p-2}\nabla u(t)-|\nabla v(s)|^{p-2}\nabla v(s),\nabla u(t)- \nabla v(s) \bigg \rangle \geq0    
\]
immediately. It is, however, crucial to apply the sharper estimate provided by Lemma \ref{lemma:L-p}.
Similar improved estimates play a role in proving comparison principle in \cite[Lemma 2.1]{Att12} and \cite[Lemma 3.2]{Sil21}.
Using monotonicity alone the spatial error term we can obtain is 
\[
    \left(\int \limits_\Om \frac{\abs{\nabla \delta}^p}{\delta^p}\, \d x\right)^{1/p} = \left(\int \limits_\Om \abs{\nabla \log \delta}^p \d x\right)^{1/p}.
\]
Proving comparison principle with this error term seems hopeless.

\begin{lemma}\label{lemma:error-2}
    Let $u,v \in L^p(0,\T;W^{1,p}(\Om))$. Also let $\delta : \overline{\Om} \to \R$ be a positive Lipschitz function and $\zeta \in C_0^\infty(0,\T)$ be non-negative. Then it holds for all $\varepsilon>0$ that
    \begin{align*}
        -C \cdot E_{\mathrm S} &\leq \int \limits_0^\T \!\!\!  \int \limits_0^\T \!\!\! \int \limits_\Om  \frac{\zeta_\varepsilon(t,s)}{\delta}\eta''\left( \frac{u(t)-v(s)}{\delta} \right)\\
        & \hspace{-1cm} \times \Bigg( \ip{ \abs{\nabla u(t)}^{p-2}\nabla u(t) - \abs{\nabla v(s)}^{p-2}\nabla v(s), \nabla u(t) - \nabla v(s)}\\
        & \hspace{-.75cm} - \ip{ \abs{\nabla u(t)}^{p-2}\nabla u(t) - \abs{\nabla v(s)}^{p-2}\nabla v(s), \frac{\nabla\delta}{\delta} (u(t) - v(s))} \Bigg)  \d x \d t \d s,
    \end{align*}
    where $E_{\mathrm S}$ is given for $p \geq 2$ by
    \[
        E_{\mathrm S} := \left(\int_\Om \frac{\abs{\nabla \delta}^p}{\delta^{p/2}} \, \d x \right)^{2/p}
    \]
    and for $1 < p < 2$ by
    \[
        E_{\mathrm S} := \int_\Om \frac{\abs{\nabla \delta}^p}{\delta} \, \d x.
    \]
    Here $C > 0$ depends only on $p$, $\T$, $u,v$ and $\zeta$.
\end{lemma}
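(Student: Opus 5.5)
Write $a = |\nabla u(t)|^{p-2}\nabla u(t)-|\nabla v(s)|^{p-2}\nabla v(s)$, $b = \nabla u(t)-\nabla v(s)$, and $w = u(t)-v(s)$. The integrand is $\frac{\zeta_\varepsilon}{\delta}\eta''(w/\delta)\big(\langle a,b\rangle - \langle a, \tfrac{\nabla\delta}{\delta}w\rangle\big)$.

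Since $\eta''(z)=\1_{(1,2)}(z)$, the integrand vanishes unless $\delta<w<2\delta$. On that set $|w/\delta|\le 2$, so the bad term is bounded pointwise by $2|a|\,|\nabla\delta|$. The plan is to absorb $|a|$ into the monotonicity term $\langle a,b\rangle$ via Lemma~\ref{lemma:L-p}, which I take to give the sharp inequalities
\[
\langle a,b\rangle \ge c\,|a|^{p'} \ (p\ge2),\qquad \langle a,b\rangle \ge c\,|b|^2(|\nabla u|+|\nabla v|)^{p-2} \ (1<p<2),
\]
together with $|a|\le C|b|(|\nabla u|+|\nabla v|)^{p-2}$ for $p<2$.

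\textbf{Case $p\ge2$.} By Young's inequality, $2|a||\nabla\delta| \le \tfrac{c}{2}|a|^{p'} + C|\nabla\delta|^p$. The first piece is absorbed, so the integrand is bounded below by $-C\frac{\zeta_\varepsilon}{\delta}\1_{\{\delta<w<2\delta\}}|\nabla\delta|^p$. This naively gives the weight $|\nabla\delta|^p/\delta$, not the claimed $|\nabla\delta|^p/\delta^{p/2}$ to the power $2/p$.

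So I would instead keep part of $|a|$ and use an integrability bound on it. By Hölder, $|a|\in L^{p'}$ with norm controlled by $\|\nabla u\|_p^{p-1}+\|\nabla v\|_p^{p-1}$; that is where the dependence of $C$ on $u,v$ comes from. Then:
\begin{enumerate}
\item Split with a weight, using Young in the form $|a||\nabla\delta| \le \theta|a|^{p'}\delta^{\beta}+\dots$, choosing exponents so that the remaining piece, after integration over the region, is $\int|a|^{\,\cdot}\,\cdot\,|\nabla\delta|/\delta$.
\item Apply Hölder in $x$ to that remaining piece.
\end{enumerate}
Concretely: after absorption one is left with $\int\frac{|a||\nabla\delta|}{\delta}\1$, and the absorbed part gives control $\int \frac{1}{\delta}|a|^{p'}\1\,\zeta_\varepsilon$. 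Write
\[
\frac{|a||\nabla\delta|}{\delta} = \Big(\frac{|a|^{p'}}{\delta}\Big)^{1/p'}\cdot\frac{|\nabla\delta|}{\delta^{1/p}}.
\]
Hölder gives $X^{1/p'}Y^{1/p}$ with $X=\iiint\zeta_\varepsilon|a|^{p'}\1/\delta$ and $Y=\iiint\zeta_\varepsilon|\nabla\delta|^p/\delta$. Absorbing via $cX - CX^{1/p'}Y^{1/p}\ge -CY$ again yields $|\nabla\delta|^p/\delta$.

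To reach $\delta^{p/2}$ with power $2/p$, I would interpolate: use the monotonicity term only to absorb with exponent $1/2$, and use the uniform $L^{p'}$ bound on $|a|$ for the rest. Write
\[
|a||\nabla\delta|/\delta = \big(|a|^{p'}/\delta\big)^{1/2}\cdot |a|^{1-p'/2}\,|\nabla\delta|\,\delta^{-1/2}.
\]
Cauchy–Schwarz gives $X^{1/2}Z^{1/2}$ with $Z=\iiint \zeta_\varepsilon|a|^{2-p'}|\nabla\delta|^2/\delta$. Absorbing, the error is $CZ$. Then Hölder in $x$ with exponents $\frac{p'}{2-p'}$ and $\frac{p}{2}$ gives
\[
Z\le C\Big(\int|a|^{p'}\Big)^{(2-p')/p'}\Big(\int |\nabla\delta|^p\delta^{-p/2}\Big)^{2/p},
\]
where indeed $(2-p')/p' + 2/p = 1$. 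Integrating against $\zeta_\varepsilon\,\d t\,\d s$, with $\int\phi_\varepsilon(t-s)\,\d s\le 1$ and $\zeta$ bounded, this is $C\,E_{\mathrm S}$ (via Hölder in time too, since $\int_0^\T\|a\|_{p'}^{p'}$ is finite). This is exactly the stated $E_{\mathrm S}$, so this is the plan.

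\textbf{Case $1<p<2$.} I would use the analogous Cauchy–Schwarz. Write $|a|\le C|b|W^{p-2}$ with $W=|\nabla u|+|\nabla v|$, and note $\langle a,b\rangle\ge c|b|^2W^{p-2}$. Then
\[
|a||\nabla\delta| \le \tfrac c2|b|^2W^{p-2} + C\,W^{p-2}|\nabla\delta|^2.
\]
Since $W^{p-2}$ is a negative power, it must be handled by using $|a|\le W^{p-1}$ instead. On the region where $W$ is small relative to $|\nabla\delta|$, bound $|a||\nabla\delta|\le W^{p-1}|\nabla\delta|\le|\nabla\delta|^p$. Otherwise, bound $W^{p-2}|\nabla\delta|^2\le|\nabla\delta|^p$. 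Either way the error is $C\frac{\zeta_\varepsilon}{\delta}|\nabla\delta|^p$, which integrates to $C E_{\mathrm S}$.

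The main obstacle is the $p\ge2$ exponent bookkeeping: one has to absorb only "half" of the bad term against monotonicity so that the $L^{p'}$ integrability of the fluxes lowers the power of $\delta$ from $1$ to $1/2$.
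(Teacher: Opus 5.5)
\textbf{Verdict.} Your case $1<p<2$ is correct, but the case $p\geq2$ has a genuine gap: the monotonicity inequality it relies on is false for $p>2$.

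\textbf{The case $1<p<2$.} Set $W:=\abs{\nabla u(t)}+\abs{\nabla v(s)}$ and split according to whether $W\leq\abs{\nabla\delta}$ or $W>\abs{\nabla\delta}$. This gives the pointwise bound $\langle a,b\rangle-2\abs{a}\abs{\nabla\delta}\geq -C\abs{\nabla\delta}^p$. The paper obtains the same bound from Lemma~\ref{lemma:L-p}, via $\abs{a}^{p'}\leq C\langle a,b\rangle$ and Young's inequality.

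\textbf{The gap for $p\geq2$.} Both your naive Young absorption and your Cauchy--Schwarz interpolation depend on $\langle a,b\rangle\geq c\abs{a}^{p'}$. That inequality holds for $1<p\leq2$, not for $p>2$; you have the two regimes interchanged. Take $n=1$, $\nabla u(t)=1+h$ and $\nabla v(s)=1$. Then $a=(p-1)h+O(h^2)$ and $b=h$, so
\[
\frac{\langle a,b\rangle}{\abs{a}^{p'}}\approx (p-1)^{1-p'}h^{2-p'}\longrightarrow 0 \quad\text{as } h\to0,
\]
since $p'<2$. When the two gradients are close compared to their size, the monotonicity term is only of order $\abs{a}^2W^{2-p}$, which is much smaller than $\abs{a}^{p'}$. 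So the monotonicity term does not control your quantity $X$, and the step $cX-CX^{1/2}Z^{1/2}\geq-CZ$ is unjustified.

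\textbf{How to repair it.} Your structure survives once the weight is corrected.
\begin{enumerate}
\item For $p\geq2$, use $\abs{a}^2\leq C\langle a,b\rangle W^{p-2}$ instead; this is the first display in the proof of Lemma~\ref{lemma:L-p}.
\item Split $\abs{a}=(\abs{a}^2W^{2-p})^{1/2}\,W^{(p-2)/2}$ and absorb half of the bad term into the monotonicity term. What remains is $C\iiint\zeta_\varepsilon W^{p-2}\abs{\nabla\delta}^2/\delta$ over the set where $\eta''\neq0$.
\item Apply Hölder's inequality with exponents $p/(p-2)$ and $p/2$ with respect to $\zeta_\varepsilon\,\d x\d t\d s$. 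This bounds the remainder by $C E_{\mathrm S}$, with $C$ depending on $p,\T,\zeta,u,v$.
\end{enumerate}
This is exactly the paper's proof. The paper packages the absorption into Lemma~\ref{lemma:L-p}, applied to $\nabla u(t)$, $\nabla v(s)$ and the vector $\frac{\nabla\delta}{\delta}(u(t)-v(s))$, whose modulus is at most $2\abs{\nabla\delta}$ where $\eta''\neq0$.

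Note also that your first ``naive'' computation, which produced $\abs{\nabla\delta}^p/\delta$, is the correct argument for $1<p<2$, where $\langle a,b\rangle\geq c\abs{a}^{p'}$ does hold.
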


\begin{proof}
    We consider $p \geq 2$ first. By applying Lemma \ref{lemma:L-p} with $a = \nabla u(t)$, $b=\nabla v(s)$ and $c=(\nabla \delta/\delta)(u(t) - v(s))$ it follows that
    \begin{align*}
        & \quad \Bigg( \ip{ \abs{\nabla u(t)}^{p-2}\nabla u(t) - \abs{\nabla v(s)}^{p-2}\nabla v(s), \nabla u(t) - \nabla v(s)}\\
        & \hspace{.5cm} - \ip{ \abs{\nabla u(t)}^{p-2}\nabla u(t) - \abs{\nabla v(s)}^{p-2}\nabla v(s), \frac{\nabla\delta}{\delta} (u(t) - v(s))} \Bigg)\\
        &\geq \quad -K\frac{\abs{\nabla \delta}^2}{\delta^2}\abs{u(t) - v(s)}^2 \left( \abs{\nabla u(t)}^p + \abs{\nabla v(s)}^p  \right)^{(p-2)/p}.
    \end{align*}
    Hence for $p > 2$ we estimate
    \begin{align*}
        & \quad \, \int \limits_0^\T \!\!\!  \int \limits_0^\T \!\!\! \int \limits_\Om \frac{\zeta_\varepsilon(t,s)}{\delta}\eta''\left( \frac{u(t)-v(s)}{\delta} \right)\\
        & \hspace{2cm} \times \frac{\abs{\nabla \delta}^2}{\delta^2}\abs{u(t) - v(s)}^2 \left( \abs{\nabla u(t)}^p + \abs{\nabla v(s)}^p  \right)^{(p-2)/p} \,\d x \d t \d s \\
        & \leq \quad 4\int \limits_0^\T \!\!\!  \int \limits_0^\T \!\!\! \int \limits_\Om \zeta_\varepsilon(t,s)\frac{\abs{\nabla \delta}^2}{\delta} \left( \abs{\nabla u(t)}^p + \abs{\nabla v(s)}^p  \right)^{(p-2)/p} \,\d x \d t \d s\\
        & \leq \quad 4\left(\int \limits_0^\T \!\!\! \int \limits_0^\T \zeta_\varepsilon(t,s) \,\d t \d s \int \limits_\Om \frac{\abs{\nabla \delta}^p}{\delta^{p/2}} \d x \right)^{2/p} \\
        & \hspace{3cm} \times \left( \int \limits_0^\T \!\!\!  \int \limits_0^\T \!\!\! \int \limits_\Om \left( \abs{\nabla u(t)}^p + \abs{\nabla v(s)}^p\right) \zeta_{\varepsilon}(t,s) \,\d x \d t \d s \right)^{(p-2)/p}\\
        & \leq \quad 4\T^{2/p} \norm{\zeta}_{\infty} \left(\norm{\nabla u}_{L^p(\Om_\T)} + \norm{\nabla v}_{L^p(\Om_\T)}\right)^{p-2} \left(\int \limits_\Om \frac{\abs{\nabla \delta}^p}{\delta^{p/2}} \d x \right)^{2/p}.
    \end{align*}
    In the first inequality we used the fact that the integrand vanishes unless $\abs{u(t) - v(s)} \leq 2\delta$.
    In the second inequality we used Hölder's inequality with exponent $p/2$ over the measure whose density is $\zeta_\varepsilon \,\d x \d t \d s$. The last inequality is deduced from $\int \phi_\varepsilon \d \tau = 1$.
    A combination of the previous two displays proves the case $p > 2$. The case $p = 2$ is easy to deduce from the above computations.

    Then we consider $1 < p < 2$. By applying Lemma \ref{lemma:L-p} with the same parameters as above, we get
    \begin{align*}
        & \quad \Bigg( \ip{ \abs{\nabla u(t)}^{p-2}\nabla u(t) - \abs{\nabla v(s)}^{p-2}\nabla v(s), \nabla u(t) - \nabla v(s)}\\
        & \hspace{.5cm} - \ip{ \abs{\nabla u(t)}^{p-2}\nabla u(t) - \abs{\nabla v(s)}^{p-2}\nabla v(s), \frac{\nabla\delta}{\delta} (u(t) - v(s))} \Bigg)\\
        &\geq \quad -K\frac{\abs{\nabla \delta}^p}{\delta^p}\abs{u(t) - v(s)}^p.
    \end{align*}
    By a similar computation as above we get
    \begin{align*}
        & \quad \, \int \limits_0^\T \!\!\!  \int \limits_0^\T \!\!\! \int \limits_\Om \frac{\zeta_\varepsilon(t,s)}{\delta}\eta''\left( \frac{u(t)-v(s)}{\delta} \right) \frac{\abs{\nabla \delta}^p}{\delta^p}\abs{u(t) - v(s)}^p \,\d x \d t \d s \\
        & \leq \quad 2^p\int \limits_0^\T \!\!\!  \int \limits_0^\T \zeta_\varepsilon(t,s) \d t \d s \int \limits_\Om \frac{\abs{\nabla \delta}^p}{\delta} \,\d x \leq 2^p \T\norm{\zeta}_\infty \int \limits_\Om \frac{\abs{\nabla \delta}^p}{\delta} \,\d x.
    \end{align*}
    A combination of the previous two displays yields the desired inequality for $1 < p < 2$, and the proof is completed.
\end{proof}

By applying these estimates to the equation \eqref{PDE} we get the following inequality consisting of a harmless left-hand side and a delicate error term in the right-hand side.

\begin{proposition}\label{prop:error-3}
    Let $u$ be a weak subsolution and $v$ is a weak supersolution to \eqref{PDE}, respectively. Let $\psi \in L^p(0,\T;W^{1,p}(\Om)) \cap C(\overline{\Om_\T})$ such that $$u \leq \psi \leq v \text{ on }\partial_\mathcal{P} \Om_\T.$$
    Finally, let $\delta : \overline{\Om} \to \R$ be a positive Lipschitz function and $\zeta \in C_0^\infty(0,\T)$ be non-negative. Suppose that, for $\varepsilon > 0$,
    \[
        \zeta_\varepsilon(t,s) \in C_0^\infty((0,\T) \times (0,\T)),
    \]
    and also that
    \[
    x \mapsto \eta'\left( \frac{u(x,t) - v(x,s)}{\delta(x)} \right)\zeta_\varepsilon(t,s)
    \]
    lies in $W_0^{1,p}(\Om)$ for almost every $t,s \in (0,\T)$.
    Then we have
    \[
        \int \limits_0^\T \!\!\!  \int \limits_0^\T \!\!\! \int \limits_\Om \left( \abs{u(t)}^{p-2}u(t) - \abs{v(s)}^{p-2}v(s) \right)^+ \zeta'(t)\phi_\varepsilon(t-s)\,\d x \d t \d s \geq -C(E_{\mathrm T} + E_{\mathrm S}).
    \]
    Here the terms $E_{\mathrm T}$, $E_{\mathrm S}$, and the dependence of the constant $C>0$ on the data, are as in Lemmas \ref{lemma:error-1} and \ref{lemma:error-2}.
\end{proposition}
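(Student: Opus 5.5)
This follows by chaining Proposition \ref{prop:doubling}, Lemma \ref{lemma:error-1} and Lemma \ref{lemma:error-2}. The only work is to check that their hypotheses are met by the data of the present statement.

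First, I apply Proposition \ref{prop:doubling} with $\gamma := \zeta_\varepsilon$. It is non-negative and lies in $C_0^\infty((0,\T)\times(0,\T))$ by assumption. The required $W_0^{1,p}(\Om)$ property of $x\mapsto \eta'((u(x,t)-v(x,s))/\delta(x))\zeta_\varepsilon(t,s)$ is also assumed. This gives
\[
\int \limits_0^\T \!\!\!  \int \limits_0^\T \!\!\! \int \limits_\Om q_\delta(u(t),v(s)) \partial_t \zeta_\varepsilon + \tilde{q}_\delta(v(s),u(t)) \partial_s \zeta_\varepsilon \,\d x \d t \d s \;\geq\; R,
\]
where $R$ is the triple integral on the right-hand side of Proposition \ref{prop:doubling} with $\gamma=\zeta_\varepsilon$. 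This $R$ is exactly the quantity bounded from below in Lemma \ref{lemma:error-2}. Since $u,v\in L^p(0,\T;W^{1,p}(\Om))$ and $\zeta\geq0$, that lemma gives $R\geq -C\,E_{\mathrm S}$.

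Second, I bound the left-hand side from above by Lemma \ref{lemma:error-1}. This yields the main term
\[
\int \limits_0^\T \!\!\!  \int \limits_0^\T \!\!\! \int \limits_\Om \left( \abs{u(t)}^{p-2}u(t) - \abs{v(s)}^{p-2}v(s) \right)^+ \zeta'(t)\phi_\varepsilon(t-s)\,\d x \d t \d s
\]
plus $C\,E_{\mathrm T}$. Chaining the two inequalities gives the claim, with $C$ taken as the larger of the two constants.

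The only obstacle is a hypothesis of Lemma \ref{lemma:error-1}: it requires $u$ to be essentially bounded from above and $v$ essentially bounded from below. This is not stated in the proposition, so I would handle it in one of two ways.

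\emph{Option one.} Read the constant dependence ``on $u,v$'' as implicitly assuming these bounds. Then note that they hold in the intended application, because $\psi\in C(\overline{\Om_\T})$ is bounded and $u\leq\psi\leq v$ on $\partial_\mathcal{P}\Om_\T$. A standard truncation and comparison with the constant $\max\psi$, which is a solution, gives $u\leq \max\psi$, and symmetrically $v\geq\min\psi$.

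\emph{Option two.} Justify the bounds directly. If $u$ is a subsolution, then so is $\min(u,k)$ for a constant $k$, and replacing $u$ by $\min(u,\norm{\psi}_\infty)$ preserves $u\leq\psi$ on the boundary. I would then check that Lemma \ref{lemma:q-error} really only needs $M=\norm{u^+}_\infty+\norm{(-v)^+}_\infty$ finite.

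I would adopt option one and state explicitly that the boundedness hypotheses of Lemma \ref{lemma:error-1} are in force, with the constant depending on them. The role of $\psi$ in the proposition is then only to supply this boundedness and the zero-trace setting that makes the test function admissible.
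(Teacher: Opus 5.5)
Your proposal is correct and matches the paper's proof. The paper also chains Proposition \ref{prop:doubling} with Lemmas \ref{lemma:error-1} and \ref{lemma:error-2}, and gets the one-sided bounds on $u$ and $v$ from the maximum principle (Lemma \ref{lemma:max-principle}), which is exactly your ``comparison with the constant $\max\psi$'' in option one; it follows from the proposition's own hypotheses, so no extra assumption is needed. Discard option two, though: the minimum of a subsolution and a constant is not a subsolution in general (it is the maximum that is), and truncating would change the functions the statement is about.
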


\begin{proof}
    By Lemma \ref{lemma:max-principle}, $u$ is essentially bounded from above and $v$ is essentially bounded from below, with bounds depending only on $\psi$. Thus Lemma \ref{lemma:error-1} applies, and the claim follows by estimating the inequality in Proposition \ref{prop:doubling} using Lemmas \ref{lemma:error-1} and \ref{lemma:error-2}.
\end{proof}

We finish this section in verifying that the triple integral in the left-hand side of Proposition \ref{prop:error-3} is indeed harmless.

\begin{proposition}\label{prop:harmless}
    Let $u,v \in L^p(\Om_\T)$ and $\zeta \in C_0^\infty(0,\T)$. Then it holds that, as $\varepsilon \downarrow 0$, the integrals
    \[
        \int \limits_0^\T \!\!\!  \int \limits_0^\T \!\!\! \int \limits_\Om \left( \abs{u(t)}^{p-2}u(t) - \abs{v(s)}^{p-2}v(s) \right)^+ \zeta'(t)\phi_\varepsilon(t-s)\,\d x \d t \d s
    \]
    converge to the integral
    \[
        \int \limits_0^\T \!\!\! \int \limits_\Om \left( \abs{u}^{p-2}u - \abs{v}^{p-2}v \right)^+ \zeta' \,\d x \d t.
    \]
\end{proposition}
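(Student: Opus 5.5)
We write $U:=\abs{u}^{p-2}u$ and $V:=\abs{v}^{p-2}v$. Since $u,v\in L^p(\Om_\T)$, both lie in $L^{p'}(\Om_\T)\subset L^1(\Om_\T)$ with $p'=p/(p-1)$, because $\abs{U}=\abs{u}^{p-1}$ and $\Om_\T$ has finite measure. Extend $V$ by zero outside $(0,\T)$. The map $z\mapsto z^+$ is $1$-Lipschitz, so the claim reduces to the continuity of translations in $L^1$.

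Substitute $s=t-\tau$ in the triple integral. It becomes
\[
\int_{\R}\phi_\varepsilon(\tau)\, F(\tau)\,\d\tau,\qquad F(\tau):=\int \limits_0^\T \!\!\! \int \limits_\Om \bigl(U(x,t)-V(x,t-\tau)\bigr)^+\zeta'(t)\,\d x\d t.
\]
One point needs care: when $t-\tau\notin(0,\T)$, the original integrand is absent (it is $0$), whereas with the zero extension of $V$ the integrand is $U^+\zeta'$. However, $\zeta'$ is supported in some compact set $[a,b]\subset(0,\T)$. For $\varepsilon<\min\{a,\T-b\}$ and $\abs{\tau}<\varepsilon$ we have $t-\tau\in(0,\T)$ whenever $t\in[a,b]$, so the two expressions coincide for small $\varepsilon$. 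Since $\phi_\varepsilon$ is a probability density supported in $[-\varepsilon,\varepsilon]$, it suffices to show that $F(\tau)\to F(0)$ as $\tau\to0$. Note that $F(0)$ is exactly the target integral.

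The key estimate is
\[
\abs{F(\tau)-F(0)}\le \norm{\zeta'}_\infty\int \limits_a^b \!\! \int \limits_\Om \abs{V(x,t-\tau)-V(x,t)}\,\d x\d t \le \norm{\zeta'}_\infty\norm{V(\cdot,\cdot-\tau)-V}_{L^1(\Om\times\R)}.
\]
The right-hand side tends to $0$ by continuity of translations in $L^1(\Om\times\R)$, applied to the zero extension of $V$. This continuity is proved by approximating $V$ by functions in $C_c(\Om\times\R)$. Hence
\[
\abs{\textstyle\int\phi_\varepsilon(F-F(0))\,\d\tau}\le\sup_{\abs{\tau}\le\varepsilon}\abs{F(\tau)-F(0)}\to0,
\]
which proves the proposition.

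The main point to get right is this boundary bookkeeping. The integrand must not be spuriously altered near $t-\tau\in\{0,\T\}$, and this is handled by the compact support of $\zeta$. The rest of the argument is routine.
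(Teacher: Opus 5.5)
Your proof is correct and follows essentially the same route as the paper. Both arguments substitute $s=t-\tau$, use the compact support of $\zeta$ to keep $t-\tau$ inside $(0,\T)$ for small $\varepsilon$, invoke continuity of time translations, and then average against the probability density $\phi_\varepsilon$. Your version is a bit more careful than the paper's one-line citation of translation continuity in $L^p(\R^{n+1})$: you apply it in $L^1$ to $\abs{v}^{p-2}v$, which is where that function naturally lives, and you state the $1$-Lipschitz bound for $z\mapsto z^+$ explicitly.
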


\begin{proof}
    Take $\varepsilon > 0$ small enough so that the integrand in the triple integral in the statement is supported on $\Om \times (c,\T-c) \times (c,\T-c)$ for $c>0$. By a change of variable $(t,s)=(t,t-z)$ the triple integral can be rewritten as
    \[
      \int\limits_{-\varepsilon}^\varepsilon \frac{1}{\varepsilon}\phi\left( \frac{z}{\varepsilon} \right) \textup{I}(z) \, \d z,
    \]
    where $\textup{I}(z)$ denotes the integral
    \[
       \int \limits_0^\T \!\!\!\int \limits_\Om  
              \left( \abs{u(t)}^{p-2}u(t) - \abs{v(t-z)}^{p-2}v(t-z) \right)^+ \zeta'(t) \d x \d t.
    \]
    It follows from the continuity of the translation in $L^p(\R^{n+1})$ that $\textup{I}(z)$ is continuous at $z = 0$. Hence, because $\phi$ is the standard mollifier, the triple integral converges as $\varepsilon \downarrow 0$ to $\textup{I}(0)$ which is exactly the double integral in the claim.
\end{proof}

\section{Estimates on error terms}\label{sec:estimate-errors}
Our next step towards the comparison principle is to further simplify the error terms $E_{\mathrm T}$ and $E_{\mathrm S}$ from Section \ref{sec:errors} by giving explicit choices for $\delta$. These choices are designed to optimize the bounds on the error terms while keeping the treatment of the lateral boundary values tractable. Below $\delta_U$ denotes upper value and $\delta_L$ lower value.

Because the estimates in Lemma \ref{lemma:error-2} significantly differ between the singular and the degenerate ranges of $p$, we treat these cases separately.

\begin{lemma}\label{lemma:delta-1}
    Assume $1 < p < 2$.
    Let $0 < \delta_L < \delta_U$ and $D > 0$ be numbers, and let $\delta : \overline{\Om} \to \R$ be given by
    \[
       \delta(x)^{(p-1)/p} := \delta_L^{(p-1)/p} + \left(\delta_U^{(p-1)/p} - \delta_L^{(p-1)/p}\right)\left(1 - \frac{\dist\left(x,\partial \Om\right)}{D}  \right)^+
    \]
    Then $\delta$ is a positive Lipschitz function satisfying the estimates
    \[
        \int \limits_{\Om} \delta^{p-1} \,\d x \leq C\left(\delta_L^{p-1} + D \delta_U^{p-1} \right)
    \]
    and
    \[
    \int \limits_{\Om} \frac{\abs{\nabla \delta}^p}{\delta} \,\d x \leq C(\delta_U/D)^{p-1}.
    \]
    The constant $C>0$ depends only on $p$ and $\Om$.
\end{lemma}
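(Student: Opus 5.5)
Write $\beta := (p-1)/p\in(0,1/2)$, $d(x):=\dist(x,\partial\Om)$, $g(x):=(1-d(x)/D)^+$ and $\Delta:=\delta_U^\beta-\delta_L^\beta>0$, so that $w:=\delta^\beta=\delta_L^\beta+\Delta g$. The plan is to work with $w$ throughout and translate every quantity back into powers of $w$.

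\emph{Positivity and Lipschitz continuity.} Since $d$ is $1$-Lipschitz, $g$ is $(1/D)$-Lipschitz with values in $[0,1]$. Hence $w$ is Lipschitz and takes values in $[\delta_L^\beta,\delta_U^\beta]$. The map $r\mapsto r^{1/\beta}$ is smooth on this compact interval of positive numbers, so $\delta=w^{1/\beta}$ is Lipschitz and satisfies $\delta\ge\delta_L>0$.

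\emph{First estimate.} Note that $\delta^{p-1}=w^{p}$. By convexity,
\[
w^p\le 2^{p-1}\bigl(\delta_L^{p-1}+\Delta^p g^p\bigr)\le 2^{p-1}\bigl(\delta_L^{p-1}+\delta_U^{p-1}\1_{\{d<D\}}\bigr).
\]
We then use $|\{x\in\Om: d(x)<D\}|\le C(\Om)D$. This holds for a bounded Lipschitz domain when $D$ is at most a constant depending on $\Om$. For large $D$ we instead bound the measure by $|\Om|\le (|\Om|/D_0)D$ whenever $D\ge D_0$. Integrating the display gives $\int_\Om\delta^{p-1}\le C(\delta_L^{p-1}+D\delta_U^{p-1})$.

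\emph{Second estimate.} From $\delta=w^{1/\beta}$ we get $\nabla\delta=\beta^{-1}w^{1/\beta-1}\nabla w$, and $1/\beta-1=1/(p-1)$. Therefore
\[
\frac{|\nabla\delta|^p}{\delta}=\beta^{-p}\,w^{\frac{p}{p-1}-\frac{p}{p-1}}|\nabla w|^p=\beta^{-p}|\nabla w|^p .
\]
This cancellation is precisely why the exponent $(p-1)/p$ was chosen. Now $\nabla w=0$ a.e.\ outside $\{d<D\}$ and $|\nabla w|\le\Delta/D\le\delta_U^\beta/D$ a.e. Hence
\[
\int_\Om\frac{|\nabla\delta|^p}{\delta}\le\beta^{-p}\frac{\delta_U^{p-1}}{D^p}\,|\{d<D\}|\le C\,\frac{\delta_U^{p-1}}{D^{p-1}},
\]
using the same tubular-neighbourhood bound as before.

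\emph{Main obstacle.} The only nontrivial geometric input is the bound $|\{d<D\}|\le CD$. For small $D$ this follows from the Lipschitz boundary: cover $\partial\Om$ by finitely many Lipschitz graph charts, in each of which the boundary layer of width $D$ has measure at most $C D$. Alternatively, one can use the coarea formula with $|\nabla d|=1$ together with a uniform bound on $\mathcal H^{n-1}(\{d=r\})$ for small $r$. For $D$ large the bound is trivial, as noted above. In every case the constant depends only on $p$ and $\Om$.
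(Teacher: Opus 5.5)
Your proof is correct and follows essentially the same route as the paper: the chain-rule identity $\abs{\nabla\delta}^p/\delta=(p/(p-1))^p\abs{\nabla(\delta^{(p-1)/p})}^p$, the bound $\abs{\nabla(\delta^{(p-1)/p})}\le \delta_U^{(p-1)/p}D^{-1}$ on the boundary layer $\{\dist(x,\partial\Om)<D\}$ (and zero outside it), and the layer estimate $\abs{\{\dist(x,\partial\Om)<D\}}\le CD$. The differences are cosmetic: you bound $\delta^{p-1}=w^p$ by convexity instead of splitting $\Om$ into the layer and its complement, and you justify the layer estimate for large $D$ explicitly, which the paper takes for granted.
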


\begin{proof}
    Since $x\mapsto\dist(x,\partial \Om)$ is Lipschitz it is easy to see that $\delta : \overline{\Om} \to \R$ is a positive Lipschitz function.
    
    Let us denote $\Om_D := \{ x \in \Om : \dist(x,\partial \Om) < D \}$. Note that $\delta_L \leq \delta \leq \delta_U$ on $\overline{\Om}$ and $\delta = \delta_L$ on $\Om \setminus \Om_D$. Using these we can estimate
    \begin{align*}
        \int \limits_{\Om} \delta^{p-1} \,\d x & = \int \limits_{\Om \setminus \Om_D} \delta_L^{p-1} \,\d x + \int \limits_{\Om_D} \delta^{p-1} \,\d x \leq \abs{\Om}\delta_L^{p-1}  + \abs{\Om_D} \delta_U^{p-1} \\
        & \leq C\left(\delta_L^{p-1 } + D \delta_U^{p-1}\right).
    \end{align*}
    In the last inequality we are using an elementary property $\abs{\Om_D} \leq L D$ of a Lipschitz domain.
    This proves the first inequality. For the second inequality, a chain rule gives
    \[
         \frac{\abs{\nabla \delta}^p}{\delta} =  (p/(p-1))^p \abs{\nabla \! \left(\delta^{(p-1)/p}\right) }^p \leq (p/(p-1))^p \mathds{1}_{\Om_D} \delta_U^{p-1}D^{-p} 
    \]
    which allows us to estimate
    \begin{align*}
        \int \limits_{\Om} \frac{\abs{\nabla \delta}^p}{\delta} \,\d x \leq (p/(p-1))^p \abs{\Om_D} \delta_U^{p-1}D^{-p} \leq C \left( \delta_U/D \right)^{p-1}.
    \end{align*}
    We used $\abs{\Om_D} \leq LD$ in the last inequality. This completes the proof.
\end{proof}

\begin{lemma}\label{lemma:delta-2}
    Assume $p\geq 2$.
        Let $0 < \delta_L < \delta_U$ and $D > 0$ be numbers, and let $\delta : \overline{\Om} \to \R$ be given by
    \[
       \sqrt{\delta(x)} := \sqrt{\delta_L} + \left(\sqrt{\delta_U} - \sqrt{\delta_L}\right)\left(1 - \frac{\dist\left(x,\partial \Om\right)}{D}  \right)^+.
    \]
    Then $\delta$ is a positive Lipschitz function satisfying the estimates
    \[
        \int \limits_{\Om} \delta \,\d x \leq C\left(\delta_L + D \delta_U\right)
    \]
    and
    \[
    \int \limits_{\Om} \frac{\abs{\nabla \delta}^p}{\delta^{p/2}} \,\d x \leq C\delta_U^{p/2}D^{1-p}.
    \]
    The constant $C>0$ depends only on $p$ and $\Om$.
\end{lemma}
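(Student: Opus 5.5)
The proof mirrors that of Lemma~\ref{lemma:delta-1}, with the exponent $(p-1)/p$ replaced by $1/2$. Throughout, write $\Om_D := \{ x \in \Om : \dist(x,\partial\Om) < D\}$ and $g(x) := (1 - \dist(x,\partial\Om)/D)^+$.

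\textbf{Lipschitz continuity and bounds on $\delta$.} The function $x\mapsto \dist(x,\partial\Om)$ is $1$-Lipschitz. Hence $g$ is Lipschitz with $\abs{\nabla g}\le \mathds{1}_{\Om_D}/D$ a.e., and $0\le g\le 1$. It follows that $\sqrt{\delta}$ is Lipschitz and satisfies $\sqrt{\delta_L}\le\sqrt{\delta}\le\sqrt{\delta_U}$. Since $\delta=(\sqrt\delta)^2$ is the square of a bounded Lipschitz function, it is Lipschitz, and it is positive because $\delta\ge\delta_L>0$. Moreover, $\delta=\delta_L$ on $\Om\setminus\Om_D$.

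\textbf{First estimate.} Split the integral over $\Om\setminus\Om_D$ and $\Om_D$:
\[
\int_\Om\delta\,\d x\le \abs{\Om}\delta_L+\abs{\Om_D}\delta_U .
\]
Then use the Lipschitz-domain property $\abs{\Om_D}\le LD$, exactly as in the proof of Lemma~\ref{lemma:delta-1}.

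\textbf{Second estimate.} This is the only point where the specific choice $\sqrt\delta$ matters. The chain rule gives $\nabla\delta = 2\sqrt\delta\,\nabla\sqrt\delta$, so
\[
\frac{\abs{\nabla\delta}^p}{\delta^{p/2}} = 2^p\abs{\nabla\sqrt\delta}^p = 2^p\left(\sqrt{\delta_U}-\sqrt{\delta_L}\right)^p\abs{\nabla g}^p\le 2^p\,\delta_U^{p/2}D^{-p}\,\mathds{1}_{\Om_D}.
\]
Integrating and using $\abs{\Om_D}\le LD$ again yields the bound $C\delta_U^{p/2}D^{1-p}$.

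No step is a genuine obstacle. The only slightly delicate points are justifying the chain rule for the Lipschitz composition $\delta=(\sqrt\delta)^2$ a.e., and the measure bound $\abs{\Om_D}\le LD$ for small $D$. For $D$ large (say $D\ge \operatorname{diam}\Om$), the bound $\abs{\Om_D}\le\abs{\Om}\le CD$ holds trivially, so the constant depends only on $p$ and $\Om$.
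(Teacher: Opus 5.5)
Your proposal is correct and follows the paper's proof. The first estimate comes from splitting the integral over $\Om_D$ and its complement and using $\abs{\Om_D}\le LD$, exactly as in Lemma~\ref{lemma:delta-1}. The second comes from the identity $\abs{\nabla\delta}^p/\delta^{p/2}=2^p\abs{\nabla\sqrt{\delta}}^p$ followed by the same bound as in that lemma; your extra remarks on the a.e.\ chain rule and on large $D$ are harmless refinements.
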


\begin{proof}
    The first estimate follows from an identical argument as in the proof of Lemma \ref{lemma:delta-1}. For the second, we first compute
    \[
        \frac{\abs{\nabla \delta}^p}{\delta^{p/2}} = 2^p \abs{\nabla \sqrt{\delta}}^p
    \]
    and then estimate as above.
\end{proof}

And finally, we derive explicit bounds for the error terms.
\begin{proposition}\label{prop:bound-I}
    Let $0 < \delta_L < \delta_U$ and $D > 0$, and let $\delta : \overline{\Om} \to \R$ be as in Lemma \ref{lemma:delta-1} for $1 < p < 2$ and as in Lemma \ref{lemma:delta-2} for $p\geq 2$. Also let $\zeta \in C_0^\infty(0,\T)$ and let $\varepsilon > 0$ be small enough. Then the temporal error term $E_{\mathrm T}$ as in Lemma \ref{lemma:error-1} can be estimated
    \[
        E_{\mathrm T} \leq \frac{L}{\varepsilon} \left( \delta_L^{\min\{p-1,1\}} + D\delta_U^{\min\{p-1,1\}} \right).
    \]
    Furthermore, the spatial error term $E_{\mathrm S}$ as in Lemma \ref{lemma:error-2} has the upper bound
    \[
        E_{\mathrm S} \leq \begin{cases}
            L \left(\dfrac{\delta_U}{D}\right)^{p-1} & \text{ when } 1 < p < 2,\\
            L \left(\dfrac{\delta_U^{p/2}}{D^{p-1}}\right)^{2/p} & \text{ when } p\geq 2.
        \end{cases}
    \]
    The constant $L > 0$ only on $p$, $\T$, $\Om$ and $\zeta$.
\end{proposition}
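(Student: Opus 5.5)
The proof is a direct substitution of Lemma \ref{lemma:delta-1} and Lemma \ref{lemma:delta-2} into the definitions of $E_{\mathrm T}$ and $E_{\mathrm S}$. The only point needing care is the time integrals of the mollifier in $E_{\mathrm T}$.

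\textbf{Temporal error.} The integrand of $E_{\mathrm T}$ factorizes as a function of $x$ times a function of $(t,s)$, so
\[
    E_{\mathrm T} = \left(\int \limits_\Om \delta^{\min\{p-1,1\}}\,\d x\right)\int \limits_0^\T \!\!\! \int \limits_0^\T \left( \abs{\phi_\varepsilon'(t-s)}\abs{\zeta(t)} + \phi_\varepsilon(t-s)\abs{\zeta'(t)} \right)\d t \d s .
\]
For the time factor, I integrate first in $s$ for fixed $t$. Since $\int_\R \phi_\varepsilon = 1$, the second term contributes at most $\T\norm{\zeta'}_\infty$. For the first term, $\phi_\varepsilon'(\tau)=\varepsilon^{-2}\phi'(\tau/\varepsilon)$, so $\int_\R\abs{\phi_\varepsilon'}\,\d\tau = \varepsilon^{-1}\norm{\phi'}_{L^1}$, and this term is at most $\T\norm{\zeta}_\infty\norm{\phi'}_{L^1}/\varepsilon$. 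For small $\varepsilon$, say $\varepsilon\le 1$, both pieces are bounded by $C(\T,\zeta)/\varepsilon$. Here ``small enough'' is used only to absorb the $O(1)$ term into $1/\varepsilon$; one may also require $\operatorname{supp}\zeta_\varepsilon \subset (0,\T)^2$, which does not matter for this bound.

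For the spatial factor, $\min\{p-1,1\}$ equals $p-1$ when $1<p<2$, and Lemma \ref{lemma:delta-1} gives $\int_\Om \delta^{p-1}\le C(\delta_L^{p-1}+D\delta_U^{p-1})$. When $p\ge2$ the exponent is $1$, and Lemma \ref{lemma:delta-2} gives $\int_\Om\delta\le C(\delta_L+D\delta_U)$. Multiplying the two factors yields the claimed bound on $E_{\mathrm T}$.

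\textbf{Spatial error.} For $1<p<2$, by definition $E_{\mathrm S}=\int_\Om\abs{\nabla\delta}^p/\delta$, which Lemma \ref{lemma:delta-1} bounds by $C(\delta_U/D)^{p-1}$. For $p\ge2$, $E_{\mathrm S}=\left(\int_\Om\abs{\nabla\delta}^p/\delta^{p/2}\right)^{2/p}$. Lemma \ref{lemma:delta-2} bounds the inner integral by $C\delta_U^{p/2}D^{1-p}$, and raising to the power $2/p$ gives $L(\delta_U^{p/2}/D^{p-1})^{2/p}$ with $L=C^{2/p}$.

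Nothing here is a genuine obstacle. The one step requiring attention is the $1/\varepsilon$ scaling of $\norm{\phi_\varepsilon'}_{L^1}$ together with the factorization of $E_{\mathrm T}$. The constants depend on $p,\Om$ through the two lemmas, and on $\T,\zeta$ and the fixed mollifier $\phi$ through the time integrals, as the statement requires.
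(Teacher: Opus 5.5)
Your proposal is correct and follows essentially the same argument as the paper. You factor out $\int_\Omega \delta^{\min\{p-1,1\}}\,\d x$, bound the time integrals by $C(\T,\zeta)/\varepsilon$ using $\int\phi_\varepsilon=1$ and $\norm{\phi_\varepsilon'}_{L^1}=\norm{\phi'}_{L^1}/\varepsilon$ for $\varepsilon<1$, and then insert Lemmas \ref{lemma:delta-1} and \ref{lemma:delta-2} for both error terms, raising the bound to the power $2/p$ when $p\geq 2$.
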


\begin{proof}
    The estimate for $E_{\mathrm S}$ immediately follows from Lemmas \ref{lemma:delta-1} and \ref{lemma:delta-2}. It remains to estimate $E_{\mathrm T}$. Recall from Lemma~\ref{lemma:error-1} that
    \[
        E_{\mathrm T} = \int \limits_0^\T \!\!\!  \int \limits_0^\T \!\!\! \int \limits_\Om \delta^{\min\{ p-1,1 \}}\left( \abs{\phi_\varepsilon'(t-s)}\abs{\zeta(t)} + \phi_\varepsilon(t-s)\abs{\zeta'(t)} \right) \,\d x \d t \d s.
    \]
    Let $0<\varepsilon<1$. By taking $K \geq 1$ so that $\int \abs{\phi'}\d \tau \leq K$ we can estimate
    \begin{align*}
        E_{\mathrm T} & \leq \frac{K \T}{\varepsilon}\left(\norm{\zeta}_\infty + \norm{\zeta'}_\infty\right) \int \limits_\Om \abs{\delta}^{\min\{p-1,1\}} \,\d x\\
        & \leq \frac{L}{\varepsilon} \left(\delta_L^{\min \{ p-1,1 \} } + D \delta_U^{\min \{ p-1,1 \} }\right).
    \end{align*}
    We used $\int \abs{\phi_\varepsilon'} \d \tau \leq K/\varepsilon$, and $\int \phi_\varepsilon \d \tau = 1$ in the first inequality, and Lemmas \ref{lemma:delta-1} and \ref{lemma:delta-2} in the second inequality. This completes the proof.
\end{proof} 

\section{Comparison principle and uniqueness}\label{sec:comparison}
Our main results are the following uniqueness and comparison results, Theorem~\ref{thm:unique} and Theorem~\ref{thm:comparison-principle}. In both theorems one may take
\[
    \alpha_0=
    \begin{cases}
        1/p, & 1<p<2,\\[2pt]
        (2p-2)/(3p-2), & p\geq2.
    \end{cases}
\]
 Boundary inequalities, the Cauchy--Dirichlet problem and time H\"older continuity of the boundary data are understood in the sense specified in Section~\ref{sec:Preli}.

\begin{theorem}[Uniqueness]
\label{thm:unique}
    There is $\alpha_0 \in (0,1)$ so that the following implication holds. If there is $\alpha \in (\alpha_0,1]$ so that
    \[
    \psi \in C(\overline{\Om_\T}) \cap L^p(0,\T;W^{1,p}(\Om))\quad \text{is $\alpha$-H\"older continuous in time,}
    \]
    then a solution to the Cauchy--Dirichlet problem with boundary values $\psi$ is unique.
\end{theorem}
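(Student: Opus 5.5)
The plan is to deduce uniqueness from the comparison principle. Concretely, we first prove the following: if $u$ is a weak subsolution, $v$ a weak supersolution, and $u\le\psi\le v$ on $\partial_{\mathcal P}\Om_\T$ with $\psi$ as in the statement, then $u\le v$ a.e. Given two solutions $u_1,u_2$ of the Cauchy--Dirichlet problem, $u_1-\psi,u_2-\psi\in L^p(0,\T;W^{1,p}_0(\Om))$ gives $(u_1-\psi)^+$ and $(\psi-u_2)^+$ in $L^p(0,\T;W_0^{1,p}(\Om))$. The $L^1$ initial convergence together with continuity of $\psi$ at $t=0$ gives the averaged condition (2) of Definition~\ref{def:compare}, since $\frac1h\int_h^{2h}\int_\Om(\abs{u_1}^{p-2}u_1-\abs{\psi}^{p-2}\psi)^+\,\d x\d t\to0$. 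Hence $u_1\le\psi\le u_2$, and the comparison principle yields $u_1\le u_2$. Swapping the roles gives $u_1=u_2$.

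For the comparison principle we use Proposition~\ref{prop:error-3} with $\delta$ from Lemma~\ref{lemma:delta-1} or Lemma~\ref{lemma:delta-2}, and with $\zeta_\varepsilon$ for $\zeta$ supported in $(h,\T-h)$ and $\varepsilon<h$, so that $\zeta_\varepsilon\in C_0^\infty$. The key hypothesis to verify is that $x\mapsto\eta'((u(t)-v(s))/\delta)$ has zero lateral trace whenever $\abs{t-s}<\varepsilon$. On $\partial\Om$ we have, in the trace sense, $u(t)-v(s)\le\psi(t)-\psi(s)\le H\varepsilon^\alpha$. It therefore suffices that $\delta_U\ge H\varepsilon^\alpha$, since $\eta'$ vanishes below $1$. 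Rigorously, we write $u(t)-v(s)\le (u(t)-\psi(t))^++(\psi(t)-\psi(s))+(\psi(s)-v(s))^+$. Using continuity of $\psi$ up to the boundary and $\delta$ close to $\delta_U$ near $\partial\Om$, we show that $\eta'(\cdot/\delta)$ applied to this quantity is dominated near the boundary by $W_0^{1,p}$ functions. This is the standard lemma that $0\le g\le (w)^+$-type domination with $w\in W^{1,p}_0$ gives $g\in W_0^{1,p}$; to have room, we take $\delta_U=2H\varepsilon^\alpha$. We will also need Lemma~\ref{lemma:max-principle} for boundedness, which is already built into Proposition~\ref{prop:error-3}.

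Next we balance the errors using Proposition~\ref{prop:bound-I}. For $p\ge2$ we need $\frac1\varepsilon(\delta_L+D\varepsilon^\alpha)\to0$ and $\varepsilon^{\alpha}D^{-2(p-1)/p}\to0$. We take $\delta_L=\varepsilon^2$ and $D=\varepsilon^\beta$. The conditions become $\alpha+\beta>1$ and $\alpha>2\beta(p-1)/p$. Such a $\beta$ exists iff $\alpha>\frac{2(p-1)}{p}(1-\alpha)$, i.e.\ $\alpha>(2p-2)/(3p-2)$, matching $\alpha_0$. For $1<p<2$ we need $\frac1\varepsilon D\varepsilon^{\alpha(p-1)}\to0$ and $(\varepsilon^\alpha/D)^{p-1}\to0$. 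This means $\beta>1-\alpha(p-1)$ and $\beta<\alpha$, which is solvable iff $\alpha>1/p$. Letting $\varepsilon\to0$ with Proposition~\ref{prop:harmless} then gives $\int_0^\T\int_\Om(\abs{u}^{p-2}u-\abs{v}^{p-2}v)^+\zeta'\,\d x\d t\ge0$ for all nonnegative $\zeta\in C_0^\infty(0,\T)$.

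Finally, this says that $t\mapsto\int_\Om(\abs{u}^{p-2}u-\abs{v}^{p-2}v)^+\,\d x$ is a.e.\ nonincreasing. Choosing $\zeta$ approximating $\frac1h\mathds 1_{(h,2h)}$ ramping down to $0$ at time $\tau$, and using condition (2) of Definition~\ref{def:compare}, we get that the integral vanishes for a.e.\ $\tau$, so $u\le v$. I expect the main obstacle to be the rigorous verification of the zero trace condition near $\partial\Om$: the pointwise bound holds only on the boundary, while $\delta$ varies, so one must check carefully that the truncation by $\eta'$ really lands in $W_0^{1,p}(\Om)$. I would attempt this by comparing with $\eta'$ evaluated at $((u(t)-\psi(t))^++(\psi(s)-v(s))^++\omega(x))/\delta$, where $\omega$ is a local continuity modulus of $\psi$.
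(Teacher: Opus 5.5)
Your proposal follows the paper's proof essentially step by step. Uniqueness is reduced to comparison via $u_1\le\psi\le u_2$ and $u_2\le\psi\le u_1$ on $\partial_{\mathcal P}\Om_\T$. Comparison then comes from Proposition~\ref{prop:error-3} with the profiles of Lemmas~\ref{lemma:delta-1} and~\ref{lemma:delta-2}, the same three-term splitting of $u(t)-v(s)$ for the zero-trace condition, Proposition~\ref{prop:harmless}, and the same averaging argument at the end.

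The only structural difference is the parametrization. The paper sets $\delta_L^{\min\{p-1,1\}}=c\varepsilon$ and $D\delta_U^{\min\{p-1,1\}}=\delta_L^{\min\{p-1,1\}}$. This puts $D$ exactly at the critical power of $\varepsilon$, so $E_{\mathrm T}$ is only $O(c)$ and a second limit $c\downarrow0$ is needed. You place $D=\varepsilon^\beta$ strictly inside the admissible window, so both errors vanish in the single limit $\varepsilon\downarrow0$. This is a legitimate simplification, since both constraints on $\beta$ are open, and it gives the same $\alpha_0$.

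The zero-trace step also works along the lines you suggest. For fixed $t,s$ with $\abs{t-s}<\varepsilon$, the continuous function $x\mapsto\psi(x,t)-\psi(x,s)-\delta(x)$ is strictly negative on $\partial\Om$, hence $\le-\kappa<0$ near $\partial\Om$. There $\eta'((u(t)-v(s))/\delta)\le\delta_L^{-1}\bigl((u(t)-\psi(t))^+ +(\psi(s)-v(s))^+\bigr)\in W_0^{1,p}(\Om)$. The paper gets this room from the strict inequality $\abs{t-s}<\varepsilon$ rather than from your factor $2$.

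There is, however, a slip for $1<p<2$. There $\min\{p-1,1\}=p-1$, so the bound on $E_{\mathrm T}$ in Proposition~\ref{prop:bound-I} contains the term $\delta_L^{p-1}/\varepsilon$, which you dropped from your list of conditions. With your choice $\delta_L=\varepsilon^2$ this term is $\varepsilon^{2p-3}$, which does not tend to zero when $1<p\le3/2$.

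This is not an artifact of the bound. Since $\delta\ge\delta_L$, the $\phi_\varepsilon'$ part of $E_{\mathrm T}$ in Lemma~\ref{lemma:error-1} is itself at least a positive constant times $\varepsilon^{-1}\delta_L^{p-1}$. The repair is immediate: take $\delta_L=\varepsilon^\gamma$ with $\gamma>1/(p-1)$. This still gives $\delta_L<\delta_U$ for small $\varepsilon$, because $\gamma>1\ge\alpha$. With this change your argument goes through on the whole range $1<p<2$.
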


This in particular holds whenever $\psi$ is Lipschitz continuous in time in the sense that $\psi \in L^p(0,\T;W^{1,p}(\Om)) \cap C(\overline{\Om_\T})$ and there is $L > 0$ with
\[
    \abs{\psi(x,t) - \psi(x,s)} \leq L\abs{t-s} \quad \text{for all} \quad x \in \partial \Om \quad \text{and} \quad t,s \in (0,\T).
\]

We actually prove the following comparison principle, in case one may find suitable $\psi$ in between. Of course, if $u$ or $v$ is of suitable regularity up to the boundary, then naturally one of them can act as $\psi$. The comparison principle then immediately implies the above uniqueness.
\begin{theorem}[Comparison]\label{thm:comparison-principle}
    Let $u$ be a weak subsolution and $v$ be a weak supersolution to \eqref{PDE}, respectively. Then there is $\alpha_0 \in (0,1)$ so that the following implication holds. Suppose that 
    $$
    \psi \in C(\overline{\Om_\T}) \cap L^p(0,\T;W^{1,p}(\Om))\quad \text{is $\alpha$-H\"older continuous in time}
    $$  for some $\alpha \in (\alpha_0,1]$ and satisfies 
    $$
    u \leq \psi \leq v\text{ on }\partial_\mathcal{P}\Om_\T.
    $$ 
    Then $u \leq v$ almost everywhere on $\Om_\T$.
\end{theorem}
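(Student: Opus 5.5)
The plan is to apply Proposition~\ref{prop:error-3} with $\delta$ from Lemma~\ref{lemma:delta-1} or~\ref{lemma:delta-2}, with $\delta_L,\delta_U,D$ tied to $\varepsilon$. Then let $\varepsilon\downarrow0$ and use Proposition~\ref{prop:harmless} to get
\[
\int_0^\T\!\!\int_\Om \left(\abs{u}^{p-2}u-\abs{v}^{p-2}v\right)^+\zeta'\,\d x\d t\geq 0
\]
for every non-negative $\zeta\in C_0^\infty(0,\T)$. This says that $t\mapsto\int_\Om(\abs{u}^{p-2}u-\abs{v}^{p-2}v)^+\d x$ is a.e.\ non-increasing. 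Choosing $\zeta$ as a smoothed version of $\frac1h\mathds 1_{(h,2h)}$ minus a bump at a later time $\tau$ and invoking condition (2) of Definition~\ref{def:compare} as $h\downarrow0$, this integral vanishes for a.e.\ $\tau$, hence $u\le v$ a.e.

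\textbf{Step 1 (zero lateral trace).} I would verify that $x\mapsto\eta'((u(x,t)-v(x,s))/\delta(x))\zeta_\varepsilon(t,s)$ lies in $W^{1,p}_0(\Om)$. The function $\zeta_\varepsilon$ is supported in $\abs{t-s}<\varepsilon$, and for $\varepsilon$ small it lies in $C_0^\infty((0,\T)^2)$. Write
\[
u(t)-v(s)=(u(t)-\psi(t))+(\psi(t)-\psi(s))+(\psi(s)-v(s)).
\]
The outer terms have $(\cdot)^+\in W^{1,p}_0$ by Definition~\ref{def:compare}. Near $\partial\Om$, continuity of $\psi$ up to the boundary together with the Hölder bound gives $\abs{\psi(x,t)-\psi(x,s)}\le H\varepsilon^\alpha+o(1)$ for $\dist(x,\partial\Om)$ small. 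So if $\delta_U\ge 2H\varepsilon^\alpha$ (say $\delta_U=4H\varepsilon^\alpha$), then near the boundary $u(t)-v(s)\le \delta+(u(t)-\psi(t))^++(\psi(s)-v(s))^+$. Hence $\eta'(\cdot/\delta)$ vanishes wherever the $W_0^{1,p}$ functions $(u-\psi)^+$ and $(\psi-v)^+$ are smaller than roughly $\delta_L/2$. A standard lemma, that a $W^{1,p}$ function vanishing where a $W^{1,p}_0$ function is small has zero trace, then gives the claim. This is the step where the Hölder assumption enters, and some care is needed with the uniform continuity of $\psi$ on $\partial\Om\times[0,\T]$, using an $o(1)$ band.

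\textbf{Step 2 (balancing the errors).} I would take $\delta_U=c\varepsilon^\alpha$ and $\delta_L=\varepsilon^{N}$ with $N$ large, so that $\delta_L^{\min\{p-1,1\}}/\varepsilon\to0$. Proposition~\ref{prop:bound-I} then reduces matters to requiring
\[
\frac{D\,\delta_U^{m}}{\varepsilon}\to0,\qquad E_{\mathrm S}\to0,\qquad m=\min\{p-1,1\}.
\]
For $1<p<2$ these conditions read $D\varepsilon^{\alpha(p-1)-1}\to0$ and $\varepsilon^\alpha/D\to0$. With $D=\varepsilon^\beta$, this requires $\beta<\alpha$ and $\beta>1-\alpha(p-1)$, which is possible iff $\alpha p>1$, i.e.\ $\alpha>1/p$. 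For $p\ge2$ the conditions are $\beta+\alpha-1>0$ and
\[
E_{\mathrm S}\sim\bigl(\varepsilon^{\alpha p/2-\beta(p-1)}\bigr)^{2/p}\to0,
\]
i.e.\ $\beta<\alpha p/(2p-2)$. Such $\beta$ exists iff $1-\alpha<\alpha p/(2p-2)$, i.e.\ $\alpha>(2p-2)/(3p-2)$. These thresholds match the stated $\alpha_0$, which confirms the choices.

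\textbf{Main obstacle.} I expect Step 1 to be the hardest part, namely the rigorous zero-trace argument near $\partial\Om$. There the $o(1)$ continuity modulus of $\psi$ must be absorbed: $D$ must dominate the width of the boundary band where $\abs{\psi(x,t)-\psi(x,s)}\le 2H\varepsilon^\alpha$ holds. One may need to modify $\delta$ so that it equals $\delta_U$ on a thin band of fixed width, which only adds a negligible amount to the error terms. The remaining steps are routine limits.
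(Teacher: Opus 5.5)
Your proposal is correct and is essentially the paper's proof: you use the same splitting $u(t)-v(s)\le(u(t)-\psi(t))^++(\psi(s)-v(s))^++\abs{\psi(t)-\psi(s)}$ for the zero lateral trace, Propositions~\ref{prop:error-3}, \ref{prop:bound-I} and \ref{prop:harmless}, and the same monotonicity-plus-initial-condition argument at the end; the only difference is bookkeeping, since you take $\delta_L=\varepsilon^N$ and $D=\varepsilon^\beta$ strictly inside the admissible window so that a single limit $\varepsilon\downarrow0$ kills both errors, whereas the paper takes the endpoint choice $\delta_L^{\min\{p-1,1\}}=c\varepsilon$, $D=\delta_L^{\min\{p-1,1\}}/\delta_U^{\min\{p-1,1\}}$, obtains $E_{\mathrm T}\lesssim c$, and sends $c\downarrow0$ afterwards. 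Your worry in Step 1 is unnecessary: for each fixed $(t,s)$ with $\abs{t-s}<\varepsilon$, continuity of $\psi$ and $\delta$ on $\overline{\Om}$ already gives a boundary band where $\abs{\psi(x,t)-\psi(x,s)}<\delta(x)$, and this band may have $\varepsilon$-dependent width because membership in $W_0^{1,p}(\Om)$ is checked slice by slice, so $\delta$ need not be modified; indeed, forcing $\delta=\delta_U$ on a band of $\varepsilon$-independent width would not be negligible, since it contributes order $\delta_U^{\min\{p-1,1\}}/\varepsilon$ to $E_{\mathrm T}$, which does not tend to zero.
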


In order to prove the comparison principle we choose in a suitable way the parameters $\delta_L,\delta_U$ and $D$ in terms of $\varepsilon$ and an auxiliary parameter $c>0$. We first let $\varepsilon\downarrow0$ with $c$ fixed, and then let $c\downarrow0$, so that both error terms vanish. Before proceeding to the proof, we provide a brief explanation about our choices for the convenience of the reader, when $p \geq 2$.

Let $\varepsilon > 0$ be a small number.
First of all, to even begin with, we need the functions
\begin{equation}\label{eq:test-function}
        x \mapsto \eta'\left( \frac{u(x,t) - v(x,s)}{\delta(x)} \right)\zeta_\varepsilon(t,s)
\end{equation}
to lie in $W_0^{1,p}(\Om)$ for almost every $t,s \in (0,\T)$ so that the inequalities in Proposition \ref{prop:error-3} are valid. To this end, assuming $\psi$ satisfies the condition in Theorem \ref{thm:comparison-principle}, we need $\delta_U \sim \varepsilon^\alpha$.
On the other hand, the upper bound of the temporal error term $E_{\mathrm T}$ in Proposition \ref{prop:bound-I} suggests the choice $\delta_L \ll \varepsilon$. In particular, we necessarily would have $\delta_L \ll \delta_U$. Nevertheless, the upper bound of $E_{\mathrm T}$ can balanced by choosing $D = \delta_L/ \delta_U$.
This yields the required control over the temporal error term $E_{\mathrm T}$. And finally, provided that the exponent $\alpha$ is not too small, these choices of $\delta_U$ and $D$ allow us to control the upper bound of the spatial error term $E_{\mathrm S}$ in Proposition \ref{prop:bound-I} as well.

\begin{proof}[Proof of Theorem \ref{thm:comparison-principle}]
    The value $\alpha_0$ is given in the end of the proof. For now, we fix $\psi$ and $\alpha$ as in the statement, and let $H>0$ be a corresponding H\"older constant on the lateral boundary.
    In order to prove the claim $u \leq v$, it suffices to show that
    \begin{equation}\label{eq:contraction}
        \int \limits_0^\T \!\!\! \int \limits_\Om \left( \abs{u}^{p-2}u - \abs{v}^{p-2}v \right)^+ \zeta' \,\d x \d t \geq 0
    \end{equation}
    for all non-negative $\zeta \in C_0^\infty(0,\T)$. This is due to the following standard argument. Indeed, by proving \eqref{eq:contraction} and applying it to cutoff functions $\zeta$, we get for all $\tau \in (0,\T)$ and small enough $h>0$ that
    \begin{align*}
        \frac{1}{h}& \int \limits_\tau^{\tau+h}\!\!\! \int \limits_\Om \left( \abs{u}^{p-2}u - \abs{v}^{p-2}v \right)^+ \d x\d t\\
        &\leq \quad \frac{1}{h}\int \limits_h^{2h}\!\!\! \int \limits_\Om \left( \abs{u}^{p-2}u - \abs{v}^{p-2}v \right)^+ \d x\d t\\
        &\leq \quad \frac{1}{h}\int \limits_h^{2h}\!\!\! \int \limits_\Om \left( \abs{u}^{p-2}u - \abs{\psi}^{p-2}\psi \right)^+ + \left( \abs{\psi}^{p-2}\psi - \abs{v}^{p-2}v \right)^+ \, \d x \d t.
    \end{align*}
    By applying the Lebesgue differentiation theorem to the first row and the second condition in Definition \ref{def:compare} to the last row, upon sending $h \downarrow 0$ we get 
    \[
        \int \limits_\Om \left( \abs{u(\tau)}^{p-2}u(\tau) - \abs{v(\tau)}^{p-2}v(\tau) \right)^+ \d x \leq 0
    \]
    for almost every $\tau \in (0,\T)$.
    But then we have $(\abs{u}^{p-2}u - \abs{v}^{p-2}v)^+ = 0$ almost everywhere on $\Om_\T$ by Fubini's theorem. Hence $u \leq v$.

    In order to prove \eqref{eq:contraction}, fix a non-negative $\zeta \in C_0^\infty(0,\T)$ and a sufficiently small $c>0$. 
     We choose the remaining parameters according to
    \begin{equation*}
     \delta_U := H \varepsilon^\alpha, \quad \delta_L^{\min\{ p-1,1 \}} := c \cdot \varepsilon \quad \text{and} \quad D \delta_U^{ \min\{ p-1,1 \} } := \delta_L^{\min \{p-1,1\} },
    \end{equation*}
    and let $\delta : \overline{\Om} \to \R$ be as in Lemma \ref{lemma:delta-1} for $1 < p < 2$ and as in Lemma \ref{lemma:delta-2} for $p\geq 2$. Note that by taking $c,\varepsilon$ small enough we have $\delta_L < \delta_U$.

    Now we verify the hypotheses of Proposition \ref{prop:error-3}. Since $\zeta \in C_0^\infty(0,\T)$ it follows from the definition of $\zeta_\varepsilon$, i.e.\ $
\zeta_\varepsilon(t,s)=\zeta(t)\phi_\varepsilon(t-s)$, that $\zeta_\varepsilon \in C_0^\infty( (0,\T) \times (0,\T) )$ for all small enough $\varepsilon$. Next we check that the function in \eqref{eq:test-function} lies in $W_{0}^{1,p}(\Om)$ for almost every $t,s \in (0,\T)$. If $\abs{t-s} \geq \varepsilon$ then this follows from $\zeta_\varepsilon(t,s)=0$. Suppose that $\abs{t-s} < \varepsilon$. For $x \in \Om$ we estimate
    \[
        u(x,t) - v(x,s) \leq (u(x,t) - \psi(x,t))^+ + (\psi(x,s) - v(x,s))^+ + \abs{\psi(x,t) - \psi(x,s)}.
    \]
    By the hypotheses, the first two terms in the right-hand side belong to $W_{0}^{1,p}(\Om)$ for almost every $t,s \in (0,\T)$. The remaining term can be estimated
    \[
        \frac{\abs{\psi(x,t) - \psi(x,s)}}{\delta(x)}
        \leq \frac{H \abs{t-s}^\alpha}{\delta_U} < \frac{H \varepsilon^\alpha}{\delta_U} = 1 \quad \text{for every } x \in \partial \Om.
    \]
    Here we used the fact that $\delta = \delta_U$ on $\partial \Om$. Hence, by recalling that $\eta'(z) = 0$ for $z \leq 1$, we deduce the function in \eqref{eq:test-function} to lie in $W_0^{1,p}(\Om)$ it the remaining cases. Proposition \ref{prop:error-3} is therefore applicable.
    By Proposition \ref{prop:error-3} we now get the inequality
    \begin{align*}
        \int \limits_0^\T &\!\!\! \int \limits_0^\T \!\!\! \int \limits_\Om \left( \abs{u(t)}^{p-2}u(t) - \abs{v(s)}^{p-2}v(s) \right)^+ \zeta'(t)\phi_\varepsilon(t-s)\,\d x \d t \d s\\
        &\geq \quad -L_1(E_{\mathrm T} + E_{\mathrm S}).
    \end{align*}
    
    Suppose first that $1<p<2$.
    By Proposition \ref{prop:bound-I} and the explicit choices of the parameters we have
    \[
        E_{\mathrm T} + E_{\mathrm S} \leq L_2\left( c + 
        \left(\frac{ \varepsilon^{p \cdot \alpha - 1}}{c}\right)^{p-1} \right).
    \]
    Note that here we simply rewrote
    \[
        \frac{\delta_U}{D} = \left(D \delta_U^{p-1}\right)^{-1} \delta_U^p = H^p \frac{\varepsilon^{p \cdot \alpha - 1}}{c}.
    \]
    If $\alpha>1/p$, the spatial error tends to zero as $\varepsilon\downarrow0$ with $c>0$ fixed, while the temporal error is bounded by $Cc$, with $C$ independent of $c$. Proposition \ref{prop:harmless} therefore gives
    \[
        \int \limits_0^\T\!\!\!\int \limits_\Om
        \left(\abs{u}^{p-2}u-\abs{v}^{p-2}v\right)^+
        \zeta'(t)\,\d x\d t
        \geq-Cc.
    \]
    Finally, letting $c\downarrow0$ yields \eqref{eq:contraction}.

    Then consider the case $p \geq 2$. This time we get
    \[
    E_{\mathrm T} + E_{\mathrm S} \leq L_3\left( c + 
        \left(\frac{ \varepsilon^{p\cdot \alpha/2 - (1-\alpha)(p-1)}}{c^{p-1}}\right)^{2/p} \right)
    \]
    by rewriting
    \[
        \frac{\delta_U^{p/2}}{D^{p-1}} = H^{3p/2 - 1} \frac{\varepsilon^{p\cdot \alpha/2 - (1-\alpha)(p-1)}}{c^{p-1}}.
    \]
    Again, the upper bound can be controlled when the exponent of $\varepsilon$ is positive. This holds whenever
    \[
         \alpha > \frac{2p-2}{3p-2}.
    \]
    The inequality \eqref{eq:contraction} for $p \geq 2$ now follows from the same argument as for the range $1 < p < 2$. As noted in the beginning of the proof, it immediately follows that $u \leq v$. The proof of the comparison principle is now complete. As observed, the constant $\alpha_0$ can be chosen
    \[
        \alpha_0 :=
        \begin{cases}
        1/p & \text{ if } 1 < p < 2,\\
        (2p-2)/(3p - 2) & \text{ if } p\geq 2.
        \end{cases}
    \]
\end{proof}

\begin{proof}[Proof of Theorem \ref{thm:unique}]
    Let $u$ and $v$ be solutions to the Cauchy--Dirichlet problem with boundary values $\psi$ where $\psi$ is as in the hypotheses.
    Note that the conditions in Definition \ref{def:cauchy-dirichlet} imply that $u \leq \psi \leq v$ on $\partial_\mathcal{P} \Om_\T$ and $v \leq \psi \leq u$ on $\partial_\mathcal{P} \Om_\T$.
    The uniqueness now follows from the comparison principle in Theorem \ref{thm:comparison-principle}.
\end{proof}

\appendix

\section{Maximum principle}

We recall the \emph{maximum principle}, cf.\ footnote in \cite[Section~2]{LPSSarsa}, of the equation \eqref{PDE} which is used to obtain uniform constants when we apply Lemma \ref{lemma:error-1}.

\begin{lemma}\label{lemma:max-principle}
    Let $u$ be a weak subsolution and $v$ be a weak supersolution to \eqref{PDE}, respectively. Also let $\psi \in C(\overline{\Om_\T}) \cap L^p(0,\T;W^{1,p}(\Om))$ such that $u \leq \psi \leq v$ on $\partial_\mathcal{P}\Om_\T$.
    Then it holds that 
    \[
        \esssup_{\Om_\T} u \leq \max_{\partial_{\mathcal{P}} \Om_\T} \psi \quad \text{and} \quad \min_{\partial_{\mathcal{P}}\Om_\T} \psi \leq \essinf_{\Om_\T} v.
    \]
\end{lemma}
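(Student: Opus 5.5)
We prove the two inequalities of Lemma~\ref{lemma:max-principle} by testing the subsolution inequality for $u$ with a truncation above the constant $M:=\max_{\partial_{\mathcal P}\Om_\T}\psi$; the bound for $v$ follows by symmetry. The core observation is that constants are solutions of \eqref{PDE}, so $(u-M)^+$ should be governed by an energy inequality with vanishing right-hand side.

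Since $\psi\le M$ on $\partial\Om\times[0,\T]$ and $\psi$ is continuous up to the boundary, we have $(\psi-M)^+\in L^p(0,\T;W^{1,p}_0(\Om))$. Combined with $(u-\psi)^+\in L^p(0,\T;W_0^{1,p}(\Om))$ and the pointwise bound $0\le (u-M)^+\le (u-\psi)^++(\psi-M)^+$, this gives $(u-M)^+\in L^p(0,\T;W^{1,p}_0(\Om))$. Similarly, the initial condition in Definition~\ref{def:compare}, together with $\abs{\psi}^{p-2}\psi\le\abs{M}^{p-2}M$, gives
\[
\frac1h\int \limits_h^{2h}\!\!\!\int \limits_\Om\left(\abs{u}^{p-2}u-\abs{M}^{p-2}M\right)^+\,\d x\d t\to0 .
\]

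Next we would use the test function $\varphi=\eta'((u-M)/\delta)\zeta(t)$, with $\zeta$ a time cutoff, exactly as in Lemma~\ref{lemma:key} with $f\equiv M$ and $\delta$ constant. That lemma applies because the function lies in $W_0^{1,p}(\Om)$ for a.e.\ $t$. Since $\nabla f=0$, the right-hand side is $\frac{\zeta}{\delta}\eta''\abs{\nabla u}^p\ge0$, which yields
\[
\int \limits_0^\T\!\!\!\int \limits_\Om q_\delta(u,M)\,\zeta'\,\d x\d t\ge0 .
\]
Letting $\delta\downarrow0$ turns $q_\delta(u,M)$ into $(\abs{u}^{p-2}u-\abs{M}^{p-2}M)^+$. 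Here we use monotone convergence, because $q_\delta$ increases as $\delta$ decreases and is dominated by that positive part, which is integrable since $u\in L^p$. The resulting inequality says that $t\mapsto\int_\Om(\abs{u}^{p-2}u-\abs{M}^{p-2}M)^+\,\d x$ is a.e.\ nonincreasing. We then run the same cutoff and Lebesgue differentiation argument as in the proof of Theorem~\ref{thm:comparison-principle}, using the initial limit above, to conclude that this quantity vanishes. Hence $u\le M$ a.e.

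The main obstacle is justifying Lemma~\ref{lemma:key} in this setting, but it only needs $u\in L^p(0,\T;W^{1,p})$ and not boundedness, so no circularity with Lemma~\ref{lemma:error-1} arises. For the bound on $v$, we apply the same argument to the subsolution $-v$ with boundary function $-\psi$, using $-v\le-\psi$ on $\partial_{\mathcal P}\Om_\T$. This gives $-v\le\max(-\psi)=-\min\psi$, that is, $\min_{\partial_{\mathcal P}\Om_\T}\psi\le\essinf_{\Om_\T} v$.
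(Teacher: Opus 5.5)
Apart from one step, your argument is the paper's proof. The paper tests with $H_\delta(u-M)\zeta$ and reruns the Steklov-average and convexity argument. You instead use the shifted cutoff $\eta'((u-M)/\delta)\zeta$, so that Lemma~\ref{lemma:key} with $f\equiv M$ and constant $\delta$ applies verbatim. The passage $\delta\downarrow0$ and the monotonicity and initial-average conclusion are then identical. Your check that $(u-M)^+\in L^p(0,\T;W^{1,p}_0(\Om))$ via $(u-\psi)^++(\psi-M)^+$ fills in a detail the paper only asserts. You are also right that Lemma~\ref{lemma:key} needs no boundedness of $u$, so nothing is circular.

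The step that fails as written is the initial limit. You derive it from Definition~\ref{def:compare} ``together with $\abs{\psi}^{p-2}\psi\le\abs{M}^{p-2}M$'', i.e.\ from $(\abs{u}^{p-2}u-\abs{M}^{p-2}M)^+\le(\abs{u}^{p-2}u-\abs{\psi}^{p-2}\psi)^+$ on $\Om\times(h,2h)$. But $M$ bounds $\psi$ only on $\partial_{\mathcal P}\Om_\T$, and $\psi$ is not a solution, so nothing keeps it below $M$ inside. For example, take $\psi(x,t)=M+t\chi(x)$ with $0\le\chi\in C_0^\infty(\Om)$, $\chi\not\equiv0$. Then $\max_{\partial_{\mathcal P}\Om_\T}\psi=M$, while $\psi>M$ on part of $\Om\times(h,2h)$ for every $h>0$.

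What is missing is the continuity of $\psi$ at $t=0$. Since $\psi$ is uniformly continuous on the compact set $\overline{\Om_\T}$ and $\psi(\cdot,0)\le M$ on $\overline\Om$, one has $\sup_{x\in\Om}(\abs{\psi(x,t)}^{p-2}\psi(x,t)-\abs{M}^{p-2}M)^+\to0$ as $t\downarrow0$. Now split $(\abs{u}^{p-2}u-\abs{M}^{p-2}M)^+\le(\abs{u}^{p-2}u-\abs{\psi}^{p-2}\psi)^++(\abs{\psi}^{p-2}\psi-\abs{M}^{p-2}M)^+$ and average over $\Om\times(h,2h)$. The first term vanishes in the limit by Definition~\ref{def:compare}. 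The second is at most $\abs{\Om}$ times that supremum over $t\in(h,2h)$. This is how the paper argues, and with this repair your proof goes through.
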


\begin{proof}
   It suffices to prove the part of the claim involving the subsolution $u$ by taking the subsolution $-v$.
    For $\delta>0$ we define
    \[
        H_\delta(z) := \min\{ \max\{0,z/\delta\},1 \}
    \]
    and also
    \[
        G_\delta(z,w) := (p-1)\int \limits_{w}^z H_\delta(\tau-w)\abs{\tau}^{p-2}\,\d \tau.
    \]
    Let $M \in \R$ denote the maximum of $\psi$ on $\partial_\mathcal{P}\Om_\T$. 
    Since $u$ is a weak subsolution to \eqref{PDE} it holds for every non-negative $\zeta \in C_0^\infty(\Om_\T)$ that
    \[
        \int \limits_0^\T \!\!\! \int \limits_\Om \left(\abs{u}^{p-2}u - \abs{M}^{p-2}M\right) \! \partial_t \zeta - \ip{\abs{\nabla u}^{p-2}\nabla u, \nabla \zeta } \d x \d t \geq 0.
    \]
    Now, the idea to reach the conclusion is to test the above equation with functions $\varphi(x,t) := H_\delta(u(x,t)-M)\zeta(t)$ where $\zeta \in C_0^\infty(0,\T)$ is non-negative. Indeed, $\varphi$ vanishes on $\partial\Om \times (0,\T)$ because $u\leq M$ on $\partial_\mathcal{P}\Om_\T$. However, note that $\varphi$ is not regular enough to be a test function because it does not have a time derivative. This issue is handled by regularizing $\varphi$ using Stekloff average, and then applying a convexity inequality\footnote{It is crucial here that $M$ is a constant and not a general function.} as in \cite[Lemma 1]{Otto} and also in \cite[Lemma 3.2 and Lemma 3.3]{ALP26}. This yields
    \[
        \int \limits_0^\T \!\!\! \int \limits_\Om G_\delta(u,M)\zeta' \,\d x \d t \geq \int \limits_0^\T \!\!\! \int \limits_\Om \zeta H_\delta'(u-M) \abs{\nabla u}^p \, \d x \d t \geq 0.
    \]
    By sending $\delta \downarrow 0$ it follows that
    \[
        \int \limits_0^\T \!\!\! \int \limits_\Om \left(\abs{u}^{p-2}u - \abs{M}^{p-2}M\right)^+\! \zeta' \geq 0.
    \]
    Since $\psi$ is continuous on $\overline{\Om_\T}$ and $\psi(x,0)\leq M$, the quantity
    \[
        \left(\abs{\psi(x,t)}^{p-2}\psi(x,t)-\abs{M}^{p-2}M\right)^+
    \]
    tends uniformly to zero as $t\downarrow0$. Together with Definition \ref{def:compare} and the inequality
    \[
        \begin{aligned}
            \left(\abs{u}^{p-2}u-\abs{M}^{p-2}M\right)^+
            &\leq \left(\abs{u}^{p-2}u-\abs{\psi}^{p-2}\psi\right)^+\\
            &\quad+\left(\abs{\psi}^{p-2}\psi-\abs{M}^{p-2}M\right)^+,
        \end{aligned}
    \]
    this shows that the spatial integral of the left-hand side has vanishing initial averages. The inequality obtained after letting $\delta\downarrow0$ gives this integral a nonincreasing representative. Hence it vanishes almost everywhere in $(0,\T)$, proving $u\leq M$ almost everywhere in $\Om_\T$.
\end{proof}

We note that the maximum principle in Lemma~\ref{lemma:max-principle} holds pointwise in the following sense. Since we have the one-sided bounds in Lemma~\ref{lemma:max-principle}, it follows by applying \cite[Theorem 2.3]{Lia21} to the supersolutions $-u$ and $v$ that $u$ and $v$ admit upper- and lower semicontinuous representatives
\[
    \begin{aligned}
        u^*(x,t)&=\operatorname*{ess\,lim\,sup}_{(y,s)\to(x,t)}u(y,s),\\
        v_*(x,t)&=\operatorname*{ess\,lim\,inf}_{(y,s)\to(x,t)}v(y,s).
    \end{aligned}
\]
These satisfy
\[
    \sup_{\Om_\T}u^*=\esssup_{\Om_\T}u,
    \qquad
    \inf_{\Om_\T}v_*=\essinf_{\Om_\T}v.
\]
Consequently, the conclusion of Lemma~\ref{lemma:max-principle} holds pointwise:
\[
    u^*(x,t)\leq\max_{\partial_{\mathcal P}\Om_\T}\psi,
    \qquad
    \min_{\partial_{\mathcal P}\Om_\T}\psi\leq v_*(x,t)
\]
for every $(x,t)\in\Om_\T$.

\section{Algebraic inequalities}
Here we derive some useful auxiliary estimates. We recall from Section~\ref{sec:Otto} that, for $\delta>0$,
\[
    q_\delta(z,w) := (p-1)\int \limits_w^z \eta'\left(\frac{\tau-w}{\delta}\right) \abs{\tau}^{p-2}\, \d \tau.
\]

\begin{lemma}\label{lemma:q-error}
    Let $M > 0$ and $\delta > 0$. Suppose that $z\leq M$ and $w \geq -M$. Then it holds that
    \[
        0 \leq \left(\abs{z}^{p-2}z - \abs{w}^{p-2}w\right)^+ - q_\delta(z,w) \leq K\delta^{ \min\{p-1,1\} }.
    \]
    The constant $K > 0$ depends only on $M$ and $p$. The same estimate holds if we replace $q_\delta(z,w)$ with $\tilde{q}_\delta(w,z)$.
\end{lemma}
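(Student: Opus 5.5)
Write $\Phi(\tau):=\abs{\tau}^{p-2}\tau$, so that $\Phi'(\tau)=(p-1)\abs{\tau}^{p-2}$, and
\[
    (\Phi(z)-\Phi(w))^+=(p-1)\int \limits_w^{z}\abs{\tau}^{p-2}\,\d \tau \quad\text{when } z\geq w,
\]
and $0$ otherwise. The plan is to compare this with $q_\delta(z,w)$ directly in these two cases.

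\emph{Case $z\leq w$.} Every $\tau$ between $z$ and $w$ satisfies $(\tau-w)/\delta\leq 0$. Hence $\eta'=0$ there, so $q_\delta(z,w)=0$, and the positive part is also $0$.

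\emph{Case $z>w$.} Here
\[
    (\Phi(z)-\Phi(w))^+-q_\delta(z,w)=(p-1)\int \limits_w^z\left(1-\eta'\!\left(\frac{\tau-w}{\delta}\right)\right)\abs{\tau}^{p-2}\,\d\tau .
\]
Since $0\leq\eta'\leq1$, the integrand is non-negative, which gives the lower bound $0$. For the upper bound, note that $1-\eta'((\tau-w)/\delta)$ vanishes once $\tau\geq w+2\delta$. So the integral is taken over $\tau\in[w,\min\{z,w+2\delta\}]$, with integrand at most $\abs{\tau}^{p-2}$. This gives
\[
    0\leq (\Phi(z)-\Phi(w))^+-q_\delta(z,w)\leq \Phi(b)-\Phi(w),\qquad b:=\min\{z,w+2\delta\},
\]
where $w< b\leq w+2\delta$.

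It remains to bound $\Phi(b)-\Phi(w)$ uniformly. This is the only real step, and the difficulty is that $w$ may be large, since we only know $w\geq -M$. However, $z\leq M$ and $b\leq z$, so $b\leq M$. Also $w\geq -M$, hence $b\geq w\geq -M$. Thus both $w$ and $b$ lie in $[-M,M]$ and $\abs{b-w}\leq 2\delta$. We then split by the range of $p$.

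For $p\geq2$, $\Phi$ is Lipschitz on $[-M,M]$ with constant $(p-1)M^{p-2}$. This gives the bound $2(p-1)M^{p-2}\delta$, matching $\min\{p-1,1\}=1$.

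For $1<p<2$, $\Phi$ is $(p-1)$-Hölder on $\R$, namely $\abs{\Phi(a)-\Phi(c)}\leq 2^{2-p}\abs{a-c}^{p-1}$. This gives the bound $C_p\delta^{p-1}$, with no dependence on $M$ at all.

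\emph{The $\tilde q_\delta$ version.} Use the identity $\tilde q_\delta(w,z)=q_\delta(-w,-z)$ and the oddness of $\Phi$:
\[
    (\Phi(-w)-\Phi(-z))^+=(\Phi(z)-\Phi(w))^+ .
\]
The pair $(z',w'):=(-w,-z)$ satisfies $z'\leq M$ and $w'\geq -M$, so the case already proved applies verbatim and gives the same constant $K$.
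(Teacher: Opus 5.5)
Your proof is correct and follows essentially the same route as the paper: reduce $\tilde q_\delta$ to $q_\delta$ via $\tilde q_\delta(w,z)=q_\delta(-w,-z)$, note that both sides vanish when $z\le w$, get the lower bound from $0\le\eta'\le 1$, and bound the defect by $(p-1)\int_w^{\min\{z,w+2\delta\}}\abs{\tau}^{p-2}\,\d\tau$, which is then controlled by Hölder continuity of $\tau\mapsto\abs{\tau}^{p-2}\tau$ on $[-M,M]$. The only differences are cosmetic: you merge the paper's two subcases $z<w+2\delta$ and $z\ge w+2\delta$ through $b=\min\{z,w+2\delta\}$, and you make the constants and the oddness step explicit.
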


\begin{proof}
    Since $\tilde{q}_\delta(w,z) = q_\delta(-w,-z)$ it suffices to prove the estimates for $q_\delta$.
    If $z\leq w$, then $q_\delta(z,w)=(\abs{z}^{p-2}z-\abs{w}^{p-2}w)^+=0$. Hence, we may assume that $z>w$. In particular,  it holds that $z,w \in [-M,M]$. Since $0\leq\eta'\leq1$, we have
    \[
        0\leq q_\delta(z,w)\leq(p-1)\int\limits_w^z\abs{\tau}^{p-2}\,\d\tau
        =\abs{z}^{p-2}z-\abs{w}^{p-2}w.
    \]
    This proves the first inequality in the claim. But then there is $H>0$ depending only on $M$ and $p$ so that
    \begin{equation}\label{eq:b-holder}
        \abs{\abs{z}^{p-2}z - \abs{w}^{p-2}w} \leq H \abs{z-w}^{\min\{ p-1,1 \}}.
    \end{equation}
    The remaining cases are dealt using \eqref{eq:b-holder}.
    
    In the case $w < z < w + 2\delta$ we first estimate
    \begin{align*}
        \left(\abs{z}^{p-2}z - \abs{w}^{p-2}w\right)^+ - q_\delta(z,w) & \leq \abs{\abs{z}^{p-2}z - \abs{w}^{p-2}w},
    \end{align*}
    and then apply \eqref{eq:b-holder} together with $\abs{z-w} \leq 2\delta$.
    Lastly, assume $z \geq w + 2\delta$. By the properties of $\eta$ it follows that
   \begin{align*}
       (p-1)^{-1}&\left\{ (\abs{z}^{p-2}z - \abs{w}^{p-2}w)^+ - q_\delta(z,w) \right\} \\
       &= \enspace\int \limits_{w}^z \abs{\tau}^{p-2} \d \tau -\left\{  \int \limits_{w + \delta}^{w + 2\delta} \eta'\left( \frac{\tau - w}{\delta} \right) \abs{\tau}^{p-2} \,\d \tau +  \int \limits_{w + 2\delta}^{z} \abs{\tau}^{p-2} \,\d \tau \right\}\\
       &\leq \enspace \int \limits_{w}^{w+2\delta} \abs{\tau}^{p-2} \d \tau = (p-1)^{-1}\{ \abs{w + 2\delta}^{p-2}(w + 2\delta) - \abs{w}^{p-2}w \}.
   \end{align*}
   The estimate \eqref{eq:b-holder} then closes the proof.
\end{proof}

We do not need this fact here but the constant $K$ in Lemma \ref{lemma:q-error} can be chosen to not depend on $M$ when $1 < p \leq 2$ because $z \mapsto |z|^{p-2}z$ would then be globally Hölder continuous.

Next we state some algebraic inequalities that provide sharper estimates than the basic monotonicity inequality $\langle |a|^{p-2}a-|b|^{p-2}b,a-b\rangle \geq 0$.
This is a crucial detail of the paper.

\begin{lemma}\label{lemma:L-p}
    Let $a,b,c \in \R^n$. If $p \geq 2$ it holds that
    \[
        \ip{\abs{a}^{p-2}a - \abs{b}^{p-2}b, a - b - c} \geq -K \abs{c}^2 \bigr(\abs{a}^p + \abs{b}^p\bigr)^{(p-2)/p}
    \]
    and if $1 < p < 2$ it holds that
    \[
        \ip{\abs{a}^{p-2}a - \abs{b}^{p-2}b, a - b - c} \geq -K \abs{c}^p.
    \]
    The constant $K>0$ depends only on $p$.
\end{lemma}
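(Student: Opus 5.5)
Write $A := \abs{a}^{p-2}a - \abs{b}^{p-2}b$. The plan is to split
\[
\ip{A, a-b-c} = \ip{A,a-b} - \ip{A,c},
\]
bound $\ip{A,c}$ from above by Young's inequality, and absorb part of it into the monotonicity term $\ip{A,a-b}$ using the standard quantitative monotonicity estimates for the $p$-Laplacian vector field. Both cases rest on the classical pair of estimates
\[
\ip{A,a-b} \geq \kappa\,(\abs{a}+\abs{b})^{p-2}\abs{a-b}^2,
\qquad
\abs{A} \leq \Lambda\,(\abs{a}+\abs{b})^{p-2}\abs{a-b},
\]
valid for all $p>1$ when $a,b$ are not both zero, with $\kappa,\Lambda$ depending only on $p$. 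These follow from writing $A=\int_0^1 \frac{d}{d\theta}\bigl(\abs{b+\theta(a-b)}^{p-2}(b+\theta(a-b))\bigr)\,\d\theta$ together with the known two-sided bounds $\int_0^1\abs{b+\theta(a-b)}^{p-2}\d\theta \simeq (\abs{a}+\abs{b})^{p-2}$.

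Case $p\ge2$. Put $W:=(\abs a+\abs b)^{p-2}$. By Cauchy--Schwarz and Young,
\[
\abs{\ip{A,c}} \le \Lambda W\abs{a-b}\abs c \le \kappa W\abs{a-b}^2 + \frac{\Lambda^2}{4\kappa}W\abs c^2 .
\]
The first term is absorbed by $\ip{A,a-b}$, leaving
\[
\ip{A,a-b-c}\ge -\tfrac{\Lambda^2}{4\kappa}(\abs a+\abs b)^{p-2}\abs c^2 .
\]
It then remains to note that $(\abs a+\abs b)^{p-2}\le C_p(\abs a^p+\abs b^p)^{(p-2)/p}$, which is equivalence of norms on $\R^2$ raised to the nonnegative power $p-2$. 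If $a=b=0$ the left side vanishes and there is nothing to prove.

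Case $1<p<2$. Here $W=(\abs a+\abs b)^{p-2}$ has a negative exponent, so the bound $W\abs c^2$ is useless when $a,b$ are small. The remedy is to use instead the bound $\abs{A}\le C\abs{a-b}^{p-1}$, obtained from $\abs{a-b}\le\abs a+\abs b$ so that $W\abs{a-b}\le\abs{a-b}^{p-1}$, combined with a split according to the size of $\abs c$ relative to $\abs{a-b}$:
\begin{itemize}
\item If $\abs c\ge \theta\abs{a-b}$ for a small $\theta$, estimate crudely: $\abs{\ip{A,c}}\le C\abs{a-b}^{p-1}\abs c\le C\theta^{1-p}\abs c^p$, and drop $\ip{A,a-b}\ge0$.
\item If $\abs c<\theta\abs{a-b}$, then $\abs{\ip{A,c}}\le\Lambda W\abs{a-b}\abs c<\Lambda\theta W\abs{a-b}^2\le (\Lambda\theta/\kappa)\ip{A,a-b}$. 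Choosing $\theta=\kappa/\Lambda$ makes $\ip{A,a-b-c}\ge0$.
\end{itemize}
In both sub-cases $\ip{A,a-b-c}\ge -K\abs c^p$.

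The main obstacle is justifying the two monotonicity and Lipschitz-type estimates uniformly, especially the lower bound $\ip{A,a-b}\gtrsim(\abs a+\abs b)^{p-2}\abs{a-b}^2$ for $1<p<2$, where the integrand $\abs{b+\theta(a-b)}^{p-2}$ may blow up on the segment. I would cite the standard lemma (e.g.\ Lindqvist's notes on the $p$-Laplacian) rather than reprove it. If a self-contained argument is needed, the approach would be to reduce by homogeneity to $\abs a+\abs b=1$ and check that the segment integral is comparable to $1$. This holds since the integral is bounded below by a constant (the segment has points of norm at least $1/2$ on a set of length at least a fixed fraction) and bounded above because $p-2>-1$ makes the singularity integrable.
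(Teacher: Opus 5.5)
Your proof is correct, and it has the same skeleton as the paper's. You write $\ip{A,a-b-c}=\ip{A,a-b}-\ip{A,c}$, bound $\ip{A,c}$ via Young, and absorb into the monotone term. The differences lie in the auxiliary estimates.

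For $p\geq 2$ the paper never passes to the weight $(\abs{a}+\abs{b})^{p-2}$. It keeps $I:=\int_0^1\abs{a+\tau(b-a)}^{p-2}\,\d\tau$ and uses $\abs{A}^2\leq C_1\ip{A,a-b}\,I$. This follows from the segment representation alone: square $\abs{A}\leq\max\{1,p-1\}\abs{a-b}\,I$ and use $\ip{A,a-b}\geq\min\{1,p-1\}\abs{a-b}^2 I$. The paper then needs only the trivial upper bound $I\leq(\abs{a}^p+\abs{b}^p)^{(p-2)/p}$, which comes from Hölder and convexity. So the lower bound $I\geq c\,(\abs{a}+\abs{b})^{p-2}$ that your constant $\kappa$ requires is never needed there.

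For $1<p<2$ the paper uses $\abs{A}^{p/(p-1)}\leq C_3\ip{A,a-b}$ together with Young with exponents $p/(p-1)$ and $p$, a one-line absorption. Your case split on $\abs{c}$ versus $\theta\abs{a-b}$ achieves the same thing by hand. In fact your two estimates imply the paper's inequality, since $(\abs{a}+\abs{b})^{(p-2)/(p-1)}\abs{a-b}^{(2-p)/(p-1)}\leq 1$. The routes are therefore equivalent in strength, the paper's being more compact.

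If you want your sketch of $I\simeq(\abs{a}+\abs{b})^{p-2}$ to be self-contained, fix two details.
\begin{enumerate}
\item With $\abs{a}+\abs{b}=1$, the segment need not have norm $\geq 1/2$ on a set of positive length (take $b=-a$). It does have norm $\geq 1/4$ on a parameter interval of length $1/4$ next to the endpoint of larger norm.
\item Integrability of the singularity alone gives only $I\leq C\abs{a-b}^{p-2}$, which is not uniform as $a\to b$. You must add that when $\abs{a-b}\leq\frac14(\abs{a}+\abs{b})$, the whole segment stays at distance at least $\frac14(\abs{a}+\abs{b})$ from the origin.
\end{enumerate}
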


\begin{proof}
    These estimates can be deduced from \cite[Section 12]{Lin17} but we sketch the details for the convenience of the reader. The case $p=2$ follows easily from Young's inequality. Then assume $p > 2$.  By applying 
    \[
        \left| \abs{a}^{p-2}a - \abs{b}^{p-2}b \right|^2 \leq C_1 \! \ip{ \abs{a}^{p-2}a - \abs{b}^{p-2}b,a - b } \int_0^1 \abs{a + \tau(b-a)}^{p-2} \d\tau
    \]
    we get
    \begin{align*}
        & \quad \, \ip{\abs{a}^{p-2}a - \abs{b}^{p-2}b,c} \leq \abs{\abs{a}^{p-2}a - \abs{b}^{p-2}b} \abs{c}\\
        &\leq \quad C_1 \abs{c} \sqrt{\ip{ \abs{a}^{p-2}a - \abs{b}^{p-2}b,a - b }}\sqrt{\int_0^1 \abs{a + \tau(b-a)}^{p-2} \, \d\tau}\\
        &\leq \quad (1/2)\ip{ \abs{a}^{p-2}a - \abs{b}^{p-2}b,a - b } + C_2\abs{c}^2 \int_0^1 \abs{a + \tau(b-a)}^{p-2} \, \d\tau\\
        &\leq \quad (1/2)\ip{ \abs{a}^{p-2}a - \abs{b}^{p-2}b,a - b } + K\abs{c}^2\left( \abs{a}^p + \abs{b}^p  \right)^{(p-2)/p}.
    \end{align*}
    In the third row we used Young's inequality with exponent $2$, and in the fourth row Hölder's inequality with exponent $p/(p-2)$ and the convexity of $p$-integral. From this we get
    \begin{align*}
        & \quad \, \ip{\abs{a}^{p-2}a - \abs{b}^{p-2}b, a - b - c}\\
        &\geq \quad (1/2) \ip{ \abs{a}^{p-2}a - \abs{b}^{p-2}b,a - b } -  K\abs{c}^2\left( \abs{a}^p + \abs{b}^p  \right)^{(p-2)/p}\\
        &\geq \quad -K\abs{c}^2\left( \abs{a}^p + \abs{b}^p  \right)^{(p-2)/p}.
    \end{align*}
    In the last row we used $\langle\abs{a}^{p-2}a - \abs{b}^{p-2}b,a-b\rangle \geq 0$. This concludes the case $p > 2$.

    It remains to treat $1 < p < 2$. In this range it holds that
    \[
        \abs{\abs{a}^{p-2}a - \abs{b}^{p-2}b}^{p/(p-1)} \leq C_3 \ip{\abs{a}^{p-2}a - \abs{b}^{p-2}b,a-b}.
    \]
    Then we estimate
    \begin{align*}
        & \quad\, \ip{\abs{a}^{p-2}a - \abs{b}^{p-2}b,a-b}\\
        &= \quad \ip{\abs{a}^{p-2}a - \abs{b}^{p-2}b,a-b - c} + \ip{\abs{a}^{p-2}a - \abs{b}^{p-2}b, c}\\
        &\leq \quad \ip{\abs{a}^{p-2}a - \abs{b}^{p-2}b,a-b - c}\\ &
        \hspace{3cm} + (1/(2C_3))\abs{\abs{a}^{p-2}a - \abs{b}^{p-2}b}^{p/(p-1)} + K\abs{c}^p\\
        &\leq \quad \ip{\abs{a}^{p-2}a - \abs{b}^{p-2}b,a-b - c}\\
        & \hspace{3cm} + (1/2)\ip{\abs{a}^{p-2}a - \abs{b}^{p-2}b,a-b} + K \abs{c}^p.
    \end{align*}
    In the third row we used Young's inequality with exponent $p$.
    By absorbing the second term on the last row to the first row, and then applying the inequality $\langle\abs{a}^{p-2}a - \abs{b}^{p-2}b,a-b\rangle \geq 0$, the desired inequality follows. This verifies the remaining case $1 < p < 2$ and the proof is completed.
\end{proof}

\end{document}